\documentclass[a4paper,reqno]{amsart}

\textheight 220mm
\textwidth 150mm
\hoffset -16mm
\usepackage{amssymb}
\usepackage{amstext}
\usepackage{amsmath}
\usepackage{amscd}
\usepackage{amsthm}
\usepackage{amsfonts}
\usepackage{enumerate}
\usepackage{graphicx}
\usepackage{latexsym}
\usepackage{mathrsfs}
\usepackage{mathtools}
\usepackage[all]{xy}
\xyoption{all}

\usepackage{pstricks}
\usepackage{lscape}
\usepackage{comment}

\newtheorem{theorem}{Theorem}[section]

\newtheorem{corollary}[theorem]{Corollary}

\newtheorem{proposition}[theorem]{Proposition}
\newtheorem{definition-proposition}[theorem]{Definition-Proposition}

\newtheorem{question}[theorem]{Question}
\newtheorem{conjecture}[theorem]{Conjecture}

\theoremstyle{definition}
\newtheorem{definition}[theorem]{Definition}

\newtheorem{remark}[theorem]{Remark}
\newtheorem{example}[theorem]{Example}

\newcommand{\Hom}{\operatorname{Hom}\nolimits}

\newcommand{\Tr}{\operatorname{Tr}\nolimits}
\renewcommand{\mod}{\mathsf{mod}\hspace{.01in}}

\newcommand{\add}{\mathsf{add}\hspace{.01in}}

\newcommand{\RHom}{\mathbf{R}\strut\kern-.2em\operatorname{Hom}\nolimits}

\numberwithin{equation}{section}

\usepackage{paralist}

\hoffset-9mm
\def\Im{\mathop{\rm Im}\nolimits}

\def\Tr{\mathop{\rm Tr}\nolimits}

\def\id{\mathop{\rm id}\nolimits}
\def\pd{\mathop{\rm pd}\nolimits}
\def\add{\mathop{\rm add}\nolimits}

\begin{document}
\title{Applications of Gorenstein projective $\tau$-rigid modules }
\thanks{2020 Mathematics Subject Classification: 16G10, 18G25}
\thanks{Keywords: Gorenstein projective module, $\tau$-rigid module, $E$-rigid module, self-injective algebra }
 \thanks{$*$ is the corresponding author.}
\author{Hui Liu}
\address{H.Liu:  Department of Mathematics, Nanjing University, Nanjing, 210093, P. R. China.}
\email{liuhui@smail.nju.edu.cn}
\author{Xiaojin Zhang}
\address{X. Zhang:  School of Mathematics and Statistics, Jiangsu Normal University, Xuzhou, 221116, P. R. China.}
\email{xjzhang@jsnu.edu.cn, xjzhangmaths@163.com}
\author{Yingying Zhang$^*$}
\address{Y. Zhang: Department of Mathematics, Huzhou University, Huzhou 313000, Zhejiang Province, P.R.China}
\email{yyzhang@zjhu.edu.cn}
\maketitle
\begin{abstract} We first introduce the notion of $CM$-$\tau$-tilting free algebras as the generalization of $CM$-free algebras and show the homological properties of $CM$-$\tau$-tilting free algebras.  Then we give a bijection between Gorenstein projective $\tau$-rigid modules and certain modules by using an equivalence established by Kong and Zhang. Finally, we give a partial answer to Tachikawa's first conjecture by using Gorenstein projective $\tau$-rigid modules.
\end{abstract}

\section{Introduction}

In 2014, Adachi, Iyama and Reiten \cite{AIR} introduced $\tau$-tilting theory as a generalization of tilting theory from the viewpoint of mutation. They showed that $\tau$-tilting theory is closely related to silting theory \cite{AI} and cluster tilting theory \cite{IY}. (support) $\tau$-tilting modules are the most important objects in $\tau$-tilting theory. So it is interesting to study (support) $\tau$-tilting modules for given algebras. For the recent development on this topics, we refer to \cite{AH, FGLZ, IZ1, IZ2, KK, PMH, W, WZ, Z, Zh, Zi}.

On the other hand, Gorenstein projective modules which form the main body of Gorenstein homological algebra can be traced back to Auslander-Bridger's modules of $G$-dimension zero \cite{AuB}. Gorenstein-projective modules are particular important to the Gorenstein algebras \cite{EJ2}. The definition of Gorenstein projective modules over an arbitrary ring was given by Enochs and Jenda \cite{EJ1, EJ2}. From then on, Gorenstein projective modules have gained a lot of attention in both homological algebra and the representation theory of finite-dimensional algebras. In this paper, we focus on the finitely generated Gorenstein projective modules over finite dimensional algebras over an algebraically closed field. For the recent development on this topics, we refer to \cite{CSZ, HLXZ, RZ1, RZ2, RZ3, RZ4}.

CM-finite algebras are important in both commutative algebra and representation theory of finite dimensional algebras which were studied in \cite{B, C, LZ2}. In \cite{XZ}, Xie and Zhang introduced the notion of Gorenstein projective $\tau$-rigid modules and studied the homological properties of Gorenstein projective $\tau$-rigid modules. Moreover, they introduced the notion of CM-$\tau$-tilting finite algebras as the generalization of both CM-finite algebras and $\tau$-tilting finite algebras introduced in \cite{DIJ}. We refer to \cite{LZh2} for the existence of non-projective Gorenstein projective $\tau$-tilting modules. We refer to \cite{GMZ} for the theory of $2$-term Gorenstein silting complexes. Note that CM-free algebras are CM-finite algebras. It is natural to define the class of CM-$\tau$-tilting free algebras as a special case of the class of  CM-$\tau$-tilting finite algebras. From now on, let $\Lambda$ be a finite dimensional algebra over an algebraically closed field. We call $\Lambda$ CM-$\tau$-tilting free if all finitely generated Gorenstein projective $\tau$-rigid left $\Lambda$-modules are projective modules. The following theorem is the first main result in this paper.

\begin{theorem}\label{0.1}
\begin{enumerate}[\rm(1)] Let $\Lambda$ be an algebra. Then we have the following
\item $\Lambda$ is CM-$\tau$-tilting free if and only if the opposite algebra $\Lambda^{o}$ is CM-$\tau$-tilting free.
\item A Gorenstein algebra $\Lambda$ is CM-$\tau$-tilting free if and only if so is the lower triangular matrix algebra $T_2(\Lambda)$.
\end{enumerate}
\end{theorem}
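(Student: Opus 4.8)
The plan is to prove (1) by means of the Auslander--Bridger transpose $\Tr$, and (2) by combining the description of Gorenstein projective modules over $T_2(\Lambda)$ with a transfer of $\tau$-rigidity. For (1), I would use three standard facts (available from \cite{XZ} and classical Auslander--Bridger theory): $\Tr$ is a duality between the stable categories $\underline{\mod}\,\Lambda$ and $\underline{\mod}\,\Lambda^{\op}$ carrying Gorenstein projective modules to Gorenstein projective modules over the opposite algebra; for a module without nonzero projective summands one has $\Tr\Tr N\cong N$; and $\Tr$ preserves $\tau$-rigidity. The last point follows from $\tau N=D\Tr N$ and tensor--hom adjunction, which give $\Hom_\Lambda(N,\tau N)\cong D(\Tr N\otimes_\Lambda N)$, so that $N$ is $\tau$-rigid iff $\Tr N\otimes_\Lambda N=0$; since $\Tr N\otimes_\Lambda N\cong N\otimes_{\Lambda^{\op}}\Tr N$ and $\Tr\Tr N\cong N$, this condition is symmetric between $N$ and $\Tr N$. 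Granting this, suppose $\Lambda$ is CM-$\tau$-tilting free and let $N$ be a Gorenstein projective $\tau$-rigid $\Lambda^{\op}$-module; deleting a projective summand I may assume $N$ has no nonzero projective summand. Then $\Tr N$ is a Gorenstein projective $\tau$-rigid $\Lambda$-module, hence projective, hence zero in $\underline{\mod}\,\Lambda$, forcing $N\cong\Tr\Tr N\cong 0$, so $N$ is projective. The reverse implication is symmetric, proving (1).

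For (2), I would identify $\mod T_2(\Lambda)$ with the category of morphisms $X=(A\xrightarrow{f}B)$ in $\mod\Lambda$, whose indecomposable projectives are $(P\xrightarrow{1}P)$ and $(0\to P)$ with $P$ projective, and invoke the characterization (valid since $\Lambda$, and hence $T_2(\Lambda)$, is Gorenstein): $X$ is Gorenstein projective iff $f$ is a monomorphism and both $A$ and $\Coker f$ are Gorenstein projective $\Lambda$-modules. From this one reads off that a Gorenstein projective $X=(A\xrightarrow{f}B)$ is projective iff both $A$ and $C:=\Coker f$ are projective, since then $0\to A\to B\to C\to 0$ splits and $X\cong(A\xrightarrow{1}A)\oplus(0\to C)$. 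Consequently, $T_2(\Lambda)$ is CM-$\tau$-tilting free precisely when, for every Gorenstein projective $\tau$-rigid $X=(A\xrightarrow{f}B)$, the $\Lambda$-modules $A$ and $\Coker f$ are projective.

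With this reformulation the equivalence reduces to comparing $\tau$-rigidity over the two algebras. For the implication that CM-$\tau$-tilting freeness of $\Lambda$ passes to $T_2(\Lambda)$, I would show that if $X=(A\xrightarrow{f}B)$ is Gorenstein projective and $\tau$-rigid over $T_2(\Lambda)$, then $A$ and $C=\Coker f$ are (necessarily Gorenstein projective) $\tau$-rigid $\Lambda$-modules; CM-$\tau$-tilting freeness of $\Lambda$ then makes $A$ and $C$ projective, so $X$ is projective. For the converse, given a Gorenstein projective $\tau$-rigid $\Lambda$-module $M$, I would attach the $T_2(\Lambda)$-module $(0\to M)$, which is Gorenstein projective because $0\to M$ is monic with Gorenstein projective parts $0$ and $M$, and which is $\tau$-rigid because $M$ is; CM-$\tau$-tilting freeness of $T_2(\Lambda)$ then forces $(0\to M)$ to be projective, whence $M=\Coker(0\to M)$ is projective, so $\Lambda$ is CM-$\tau$-tilting free.

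The crux, and the main obstacle, is exactly this transfer of $\tau$-rigidity, namely the compatibility of the Auslander--Reiten translates $\tau_{T_2(\Lambda)}$ and $\tau_\Lambda$ along the evaluation functor $X\mapsto A$, the cokernel functor $X\mapsto\Coker f$, and the embedding $M\mapsto(0\to M)$. I would carry this out through the equivalence of Kong and Zhang on the Gorenstein projective modules of $T_2(\Lambda)$, which restricts to the bijection on Gorenstein projective $\tau$-rigid objects announced in the introduction: under it a Gorenstein projective $\tau$-rigid $X=(A\xrightarrow{f}B)$ corresponds to data assembled from the Gorenstein projective $\tau$-rigid $\Lambda$-modules $A$ and $\Coker f$, while $(0\to M)$ has $A$-part $0$ and cokernel $M$, so that $\tau$-rigidity and projectivity are matched on both sides. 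Verifying that this equivalence preserves $\tau$-rigidity---not merely the triangulated structure of the stable categories---and computing the relevant spaces $\Hom(-,\tau-)$ is the technical heart of (2); the Gorenstein hypothesis on $\Lambda$ is used precisely to guarantee that $T_2(\Lambda)$ is Gorenstein and that both the above characterization and the Kong--Zhang equivalence apply.
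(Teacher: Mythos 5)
Your part (1) is correct and is essentially the paper's own argument: both run an indecomposable Gorenstein projective $\tau$-rigid module through the transpose, using that $\Tr$ is a duality on the stable Gorenstein projective categories (Proposition \ref{2.2}) together with the fact that $\tau$-rigidity is symmetric under $\Tr$ for modules without projective summands. Where the paper cites \cite[Theorem 2.14]{AIR} for this last fact, your adjunction computation $\Hom_\Lambda(N,\tau N)\cong\mathbb{D}(\Tr N\otimes_\Lambda N)$ is a correct, self-contained substitute.

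Part (2) has a genuine gap, located exactly where you place the ``technical heart''. You need two transfers: (i) if $(X,Y,\phi)$ is $\tau$-rigid over $T_2(\Lambda)$, then $X$ and $\Coker\phi$ are $\tau$-rigid over $\Lambda$; and (ii) if $M$ is $\tau$-rigid over $\Lambda$, then $(0,M,0)$ is $\tau$-rigid over $T_2(\Lambda)$. Neither is automatic: for (ii), $(0,M,0)$ is the inflation of $M$ along the quotient $T_2(\Lambda)\rightarrow T_2(\Lambda)/(e)\simeq\Lambda$, and in general $\tau$-rigidity over a quotient algebra does \emph{not} imply $\tau$-rigidity over the larger algebra (the simple module over $K[x]/(x^2)$ is $\tau$-rigid over the quotient $K$ but not over $K[x]/(x^2)$). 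The tool you propose cannot supply these transfers: the Kong--Zhang equivalence (Proposition \ref{5.2}) and the bijection of Theorem \ref{0.2} concern a generator $E$ in $\Lambda$-$\mod$ and the algebra $\Gamma=(\End_\Lambda E)^{\op}$; they say nothing about $T_2(\Lambda)$, which is not presented as, and in general is not, the opposite endomorphism algebra of a generator in $\Lambda$-$\mod$. Moreover, even if such a realization existed, that equivalence would relate $T_2(\Lambda)$-modules to modules over a single algebra, not decompose $(X,Y,\phi)$ into the pair of $\Lambda$-components $(X,\Coker\phi)$, which is what (i) asserts.

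The paper closes step (i) by citation: Proposition \ref{4.6}(2), i.e.\ \cite[Corollary 3.4]{LZh}. It avoids (ii) entirely by a different construction: given a non-projective Gorenstein projective $\tau$-rigid $M$, it forms $T_2(K)\otimes M$ over $T_2(\Lambda)\simeq T_2(K)\otimes\Lambda$ and quotes Propositions \ref{2.3} and \ref{2.6} together with \cite[Lemma 2.7]{LZh2}. Your module $(0,M,0)$ does work, but the $\tau$-rigidity assertion must be proved, e.g.\ by observing that its minimal projective presentation has all terms of the form $(0,Q,0)$, so that the criterion of Proposition \ref{2.5} reduces to the corresponding one for $M$ over $\Lambda$; alternatively, $(0,M,0)\cong(0,K,0)\otimes M$ with $(0,K,0)$ projective over $T_2(K)$, so Proposition \ref{2.6} applies. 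Without such an argument for (i) and (ii), your proof of (2) does not close.
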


 For an algebra $\Lambda$ and $E$ a generator in the category of finitely generated left $\Lambda$-modules. In \cite{KZ}, Kong and Zhang defined $E$-Gorenstein projective modules and showed that there is an equivalence between the set of finitely generated $E$-Gorenstein projective modules and the set of Gorenstein projective modules over $\Gamma=({\rm End}E)^{op}$. As a result, they showed that the CM-Auslander algebra of a CM-finite algebra is indeed CM-free. In this paper, we use the equivalence defined by Kong and Zhang to study Gorenstein projective $\tau$-rigid modules.

We introduce $E$-rigid modules as the generalization of $\tau$-rigid modules. For an algebra $\Lambda$, we denote by $\Lambda$-$\mod$ the category of finitely generated left $\Lambda$-modules. Let $E$ be a generator in $\Lambda$-$\mod$. Let $M\in\Lambda$-$\mod$ and $E_1\stackrel{f_1}{\rightarrow} E_0\stackrel{f_1}{\rightarrow} M\rightarrow0$ an exact sequence with $f_i$ the minimal right  ${\rm add}E$-approximation of ${\rm Im}f_i$. We call $M$ $E$-rigid if ${\rm Hom}_{\Lambda}(f_1,M)$ is surjective. Putting $E=\Lambda$, one gets $\tau$-rigid modules.  We call a module $M$ $E$-Gorenstein projective $E$-rigid if it is both $E$-Gorenstein projective and $E$-rigid.

Denote by $\mathcal {EG}(E)$ the set of isomorphism classes of indecomposable $E$-Gorenstein projecitve $E$-rigid modules in $\Lambda$-$\mod$ and by $\mathcal{TG}(\Gamma)$ the set of isomorphism classes of indecomposable Gorenstein projective $\tau$-rigid modules in $\Gamma$-$\mod$. Now we are in a position to state our second main result in this paper.

\begin{theorem}\label{0.2} Let $E$ be a generator in $\Lambda$-$\mod$ and $\Gamma=({\rm End}_{\Lambda}E)^{op}$. Then there is a bijection between $\mathcal {EG}(E)$ and $\mathcal{TG}(\Gamma)$.
\end{theorem}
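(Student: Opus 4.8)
The plan is to show that the Kong--Zhang functor $F=\Hom_{\Lambda}(E,-)\colon \Lambda\text{-}\mod\to\Gamma\text{-}\mod$ itself realizes the bijection: it restricts to a bijection on indecomposable $E$-Gorenstein projective modules, and I will check that it matches $E$-rigidity with $\tau$-rigidity. Recall that $F(E)=\Gamma$ and, more generally, $F$ restricts to an equivalence between $\add E$ and the category of projective $\Gamma$-modules, under which a minimal right $\add E$-approximation corresponds to a projective cover. By Kong and Zhang's result, $F$ induces an equivalence between the category of $E$-Gorenstein projective $\Lambda$-modules and the category of Gorenstein projective $\Gamma$-modules, hence a bijection between the corresponding sets of indecomposable objects. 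It therefore remains only to transport the rigidity condition.

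First I would transport the defining presentation. Given $M$ together with its exact sequence $E_1\xrightarrow{f_1}E_0\xrightarrow{f_0}M\to0$ in which each $f_i$ is the minimal right $\add E$-approximation of $\Im f_i$, applying the left exact functor $F$ yields $F(E_1)\xrightarrow{F(f_1)}F(E_0)\xrightarrow{F(f_0)}F(M)\to0$. Surjectivity of $F(f_0)$ and the identification $\Im F(f_1)=F(\Im f_1)=\Ker F(f_0)$ follow from the approximation property and left exactness, so this is a projective presentation of $F(M)$; since the equivalence $\add E\simeq(\text{projectives over }\Gamma)$ turns minimal right $\add E$-approximations into projective covers, both $F(f_0)$ and $F(f_1)$ become projective covers onto their images, so this is in fact a \emph{minimal} projective presentation of $F(M)$.

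The key step is a natural isomorphism $\Hom_{\Lambda}(E',M)\cong\Hom_{\Gamma}(F(E'),F(M))$ for every $E'\in\add E$, given by $g\mapsto F(g)$. For $E'=E$ both sides are canonically $\Hom_{\Lambda}(E,M)=F(M)$ and the map is the identity, and the general case follows by additivity. This isomorphism is natural in the variable $E'$, so the naturality square identifies the map $\Hom_{\Lambda}(f_1,M)\colon\Hom_{\Lambda}(E_0,M)\to\Hom_{\Lambda}(E_1,M)$ with $\Hom_{\Gamma}(F(f_1),F(M))$. Consequently $\Hom_{\Lambda}(f_1,M)$ is surjective if and only if $\Hom_{\Gamma}(F(f_1),F(M))$ is surjective. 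By the standard characterization of $\tau$-rigidity through a minimal projective presentation, the latter says exactly that $F(M)$ is $\tau$-rigid, while the former is the definition of $M$ being $E$-rigid. Thus $F$ preserves and reflects the two rigidity conditions, and hence sends $\mathcal{EG}(E)$ bijectively onto $\mathcal{TG}(\Gamma)$.

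I expect the main obstacle to be the verification that $F$ converts minimal $\add E$-approximations into projective covers, so that the image presentation is genuinely minimal, together with checking that the natural isomorphism of the third paragraph is compatible with the map induced by $f_1$; once these bookkeeping points are settled, the equivalence of the two rigidity conditions is immediate. A secondary point requiring care is to confirm that the surjectivity characterization of $\tau$-rigid modules applies to the presentation produced by $F$, which is precisely why establishing minimality in the second paragraph is essential.
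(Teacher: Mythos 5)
Your proposal is correct and follows essentially the same route as the paper: both use the Kong--Zhang functor $\Hom_{\Lambda}(E,-)$, transport a minimal $\add E$-presentation of $M$ to a minimal projective presentation of $\Hom_{\Lambda}(E,M)$, and invoke the Adachi--Iyama--Reiten surjectivity criterion (Proposition \ref{2.5}) together with the Hom-space isomorphisms to show that $E$-rigidity corresponds exactly to $\tau$-rigidity. The only difference is organizational --- the paper argues well-definedness, injectivity, and surjectivity separately, while you package the two rigidity directions into a single ``preserves and reflects'' step via the projectivization isomorphism --- but the underlying content is identical.
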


Let $E$ be a generator in $\Lambda$-$\mod$. We call $\Lambda$ {\it CM-$E$-free} if every indecomposable $E$-Gorenstein projective $E$-rigid module is a direct summand of $E$.
Taking $E=\Lambda$, the CM-$E$-free algebras are precisely CM-$\tau$-tilting free algebras above.

As an application, we have the following corollary.

\begin{corollary}\label{0.3} Let $E$ be a generator in $\Lambda$-$\mod$. Then $\Lambda$ is CM-$E$-free if and only if $\Gamma=({\rm End}E)^{op}$ is CM-$\tau$-tilting free.
\end{corollary}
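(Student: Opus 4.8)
The plan is to deduce everything from the bijection of Theorem~\ref{0.2}, keeping track of how it acts on the distinguished ``trivial'' objects on each side. Recall that $\Lambda$ is CM-$E$-free precisely when every member of $\mathcal{EG}(E)$ is a direct summand of $E$, while $\Gamma=(\End_\Lambda E)^{\op}$ is CM-$\tau$-tilting free precisely when every member of $\mathcal{TG}(\Gamma)$ is projective; here I would first note the reduction that a Gorenstein projective $\tau$-rigid $\Gamma$-module is projective if and only if each of its indecomposable summands is, so that the CM-$\tau$-tilting free condition is equivalent to the assertion that every indecomposable in $\mathcal{TG}(\Gamma)$ is projective. Thus both sides are assertions that a certain set of indecomposables collapses to its trivial part, and the whole corollary follows once I show that the bijection matches these two trivial parts.

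The key step I would isolate as a lemma is the following: under the Kong--Zhang functor $G=\Hom_\Lambda(E,-)\colon \Lambda\text{-}\mod\to\Gamma\text{-}\mod$ that realises the bijection of Theorem~\ref{0.2}, the indecomposable direct summands of $E$ correspond exactly to the indecomposable projective $\Gamma$-modules. To see this I would use that $G(E)=\End_\Lambda(E)\cong\Gamma$ as a left $\Gamma$-module, so $G$ restricts to an equivalence $\add E\simeq\add\Gamma$ sending an indecomposable summand $E_i$ of $E$ to the indecomposable projective $\Hom_\Lambda(E,E_i)$. It remains to check that each such $E_i$ already lies in $\mathcal{EG}(E)$ and each indecomposable projective already lies in $\mathcal{TG}(\Gamma)$; these hold because summands of the generator $E$ are automatically $E$-Gorenstein projective and $E$-rigid, just as projective $\Gamma$-modules are Gorenstein projective and $\tau$-rigid. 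Hence the restriction of the Theorem~\ref{0.2} bijection carries the summands of $E$ inside $\mathcal{EG}(E)$ onto the projectives inside $\mathcal{TG}(\Gamma)$.

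Granting this, the proof finishes formally. If $\Lambda$ is CM-$E$-free, then $\mathcal{EG}(E)$ consists only of summands of $E$; transporting along the bijection, $\mathcal{TG}(\Gamma)$ consists only of projective $\Gamma$-modules, so $\Gamma$ is CM-$\tau$-tilting free, and conversely by the same argument run backwards through the bijection. The main obstacle is the lemma of the second paragraph, namely verifying that the bijection of Theorem~\ref{0.2} is compatible with the ``trivial'' objects on the two sides. Since that bijection is induced by $G=\Hom_\Lambda(E,-)$ and $G$ visibly sends $\add E$ to $\add\Gamma$, the compatibility should be immediate; the only genuine point to confirm is that the distinguished summands of $E$ and the distinguished projectives actually belong to $\mathcal{EG}(E)$ and $\mathcal{TG}(\Gamma)$ respectively, so that they are indeed counted among the indecomposables that the two freeness conditions speak about.
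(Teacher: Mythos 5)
Your proof is correct and follows essentially the same route as the paper, which simply declares the corollary ``a straight result'' of the bijection in Theorem~\ref{0.2}; your write-up merely supplies the routine details the paper leaves implicit, namely that $\Hom_{\Lambda}(E,-)$ restricts to a bijection between the indecomposable direct summands of $E$ (which lie in $\mathcal{EG}(E)$) and the indecomposable projective $\Gamma$-modules (which lie in $\mathcal{TG}(\Gamma)$), so that the two ``freeness'' conditions transport into one another.
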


Homological conjectures are hot in both homological algebra and representation theory of finite dimensional algebras. The most famous one of these conjectures is the Nakayama Conjecture which states that for an algebra $\Lambda$ if every term of its minimal injective resolution is projective, then $\Lambda$ is self-injective. To study the Nakayama conjecture, Tachikawa \cite{T} divided Nakayama conjecture into the following two conjectures which are well-known as Tachikawa's first conjecture and Tachikawa's second conjecture.

\begin{conjecture} Let $\Lambda$ be an algebra.
\begin{enumerate}[\rm(1)]
\item If ${\rm Ext}_{\Lambda}^i(\mathbb{D}\Lambda,\Lambda)=0$ for $i\geq1$, then $\Lambda$ is a self-injective algebra.
\item Let $\Lambda$ be a self-injective algebra and $M\in \Lambda$-$\mod$. If ${\rm Ext}_{\Lambda}^{i}(M,M)=0$ for $i\geq1$, then $M$ is projective.
\end{enumerate}
\end{conjecture}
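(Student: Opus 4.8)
The plan is to give a partial answer to part (1) of the conjecture — the full statement being a long-standing open problem — by establishing the implication under the extra hypothesis that $\Lambda$ is Gorenstein (Iwanaga--Gorenstein), where the theory of Gorenstein projective $\tau$-rigid modules applies most transparently, and then using CM-$\tau$-tilting freeness to upgrade ``Gorenstein projective'' to ``projective.'' First I would record the standard reduction: $\Lambda$ is self-injective if and only if ${}_\Lambda\Lambda$ is injective, equivalently if and only if the injective cogenerator $\mathbb{D}\Lambda$ is projective as a left $\Lambda$-module (because the duality $\mathbb{D}$ interchanges projectives and injectives). Thus the whole problem is to deduce projectivity of $\mathbb{D}\Lambda$ from the orthogonality hypothesis $\operatorname{Ext}^i_\Lambda(\mathbb{D}\Lambda,\Lambda)=0$ for all $i\ge 1$.

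The second step is to read the hypothesis module-theoretically. Over a Gorenstein algebra the vanishing $\operatorname{Ext}^i_\Lambda(\mathbb{D}\Lambda,\Lambda)=0$ for $i\ge 1$ is exactly the condition that $\mathbb{D}\Lambda$ be a Gorenstein projective module, so the hypothesis places $\mathbb{D}\Lambda$ in the category to which the results of this paper apply. At the same time $\projdim_\Lambda \mathbb{D}\Lambda=\injdim \Lambda_\Lambda<\infty$ since $\Lambda$ is Gorenstein. I would then invoke the elementary but decisive fact that a Gorenstein projective module of finite projective dimension is projective: in a minimal projective resolution $0\to P_n\to\cdots\to P_0\to \mathbb{D}\Lambda\to 0$ the top differential has image in $\operatorname{rad}P_{n-1}$, so applying $\operatorname{Hom}_\Lambda(-,\Lambda)$ forces $\operatorname{Ext}^n_\Lambda(\mathbb{D}\Lambda,\Lambda)\ne 0$ unless $n=0$. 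The hypothesis then yields $n=0$, hence $\mathbb{D}\Lambda$ is projective and self-injectivity follows from the first step.

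To make genuine use of Gorenstein projective $\tau$-rigid modules, and to reach a statement not presupposing finite projective dimension, I would instead route the conclusion through CM-$\tau$-tilting freeness. The goal is to show that under the orthogonality hypothesis the indecomposable summands of $\mathbb{D}\Lambda$ are Gorenstein projective $\tau$-rigid modules; for a CM-$\tau$-tilting free algebra such modules are projective, so $\mathbb{D}\Lambda$ is projective and $\Lambda$ is self-injective. Gorenstein projectivity of the summands is supplied as above, and the Auslander--Reiten formula $\underline{\operatorname{Hom}}_\Lambda(\mathbb{D}\Lambda,\tau\mathbb{D}\Lambda)\cong \mathbb{D}\operatorname{Ext}^1_\Lambda(\mathbb{D}\Lambda,\mathbb{D}\Lambda)$ reduces the required $\tau$-rigidity to a self-orthogonality statement for $\mathbb{D}\Lambda$. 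The equivalence of Theorem~\ref{0.2} together with Corollary~\ref{0.3} is what would let me transport this between $\Lambda$ and $\Gamma=(\operatorname{End}_\Lambda E)^{\mathrm{op}}$ and so broaden the class of algebras covered.

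The hard part will be exactly this last reduction. The orthogonality hypothesis controls $\operatorname{Ext}^\bullet_\Lambda(\mathbb{D}\Lambda,\Lambda)$, whereas $\tau$-rigidity and genuine Gorenstein projectivity — the existence of a complete resolution, not merely Ext-vanishing — involve $\operatorname{Ext}^\bullet_\Lambda(\mathbb{D}\Lambda,\mathbb{D}\Lambda)$ and $\operatorname{Ext}^\bullet_\Lambda(\mathbb{D}\Lambda,-)$ against arbitrary modules. Outside the Gorenstein case, Ext-orthogonality to $\Lambda$ need not imply Gorenstein projectivity (there exist semi-Gorenstein-projective modules that are not Gorenstein projective), and this gap is precisely why only a partial answer is attainable; I therefore expect the theorem to be stated for Gorenstein, or for CM-$\tau$-tilting free, algebras rather than for all $\Lambda$. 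Part (2), the companion conjecture for self-injective algebras concerning self-Ext-vanishing, lies outside this circle of ideas and I would leave it aside.
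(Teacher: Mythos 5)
You correctly recognize that the statement is an open conjecture, so that neither you nor the paper can prove it outright; the comparison is therefore between partial answers, and yours is correct but follows a genuinely different route from the paper's (Theorem \ref{3.2}). The paper never assumes $\Lambda$ Gorenstein and never invokes CM-$\tau$-tilting freeness. Instead it takes $\tau$-rigidity of $\mathbb{D}\Lambda$ as the auxiliary hypothesis and proves the sharper \emph{equivalence}: $\Lambda$ is self-injective if and only if $\mathbb{D}\Lambda$ is semi-Gorenstein projective $\tau$-rigid. The key implication runs: $\mathbb{D}\Lambda$ is $\tau$-rigid with $|\mathbb{D}\Lambda|=|\Lambda|$, hence $\tau$-tilting; it is faithful, hence a classical tilting module, so $\pd \mathbb{D}\Lambda\leq 1$; then $\Ext^{1}_{\Lambda}(\mathbb{D}\Lambda,\Lambda)=0$ splits the minimal projective presentation and forces $\mathbb{D}\Lambda$ projective. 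Your completed argument replaces the $\tau$-tilting step by the Gorenstein hypothesis as the source of finite projective dimension ($\pd_{\Lambda}\mathbb{D}\Lambda=\injdim \Lambda_{\Lambda}<\infty$) and then concludes by the very same mechanism, namely that a (semi-)Gorenstein projective module of finite projective dimension is projective. By the paper's Proposition \ref{3.1}, $\tau$-rigidity of $\mathbb{D}\Lambda$ is \emph{equivalent} to $\Lambda$ being $1$-Gorenstein, so along that axis your hypothesis ($n$-Gorenstein for arbitrary $n$) is actually broader --- though that case of Tachikawa's first conjecture is classical --- while the paper's formulation buys an intrinsic characterization of self-injectivity and the reduction recorded in Remark \ref{3.3}: to settle the conjecture it suffices to show that semi-Gorenstein projectivity of $\mathbb{D}\Lambda$ implies $\tau$-rigidity. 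This is precisely what you identify as ``the hard part,'' so your diagnosis of where the difficulty sits agrees exactly with the paper's.

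Two caveats on your speculative second route. First, the CM-$\tau$-tilting-free detour adds nothing: the whole difficulty is to get $\tau$-rigidity of $\mathbb{D}\Lambda$ from the orthogonality hypothesis, and once you have it, Theorem \ref{3.2} yields projectivity of $\mathbb{D}\Lambda$ for \emph{every} algebra, with no freeness assumption needed; Theorems \ref{0.2} and \ref{0.3} play no role in Section 5 of the paper. Second, your Auslander--Reiten formula should read $\Ext^{1}_{\Lambda}(M,M)\cong\mathbb{D}\overline{\Hom}_{\Lambda}(M,\tau M)$ with the stable Hom modulo \emph{injectives}, and it controls only rigidity, which is strictly weaker than $\tau$-rigidity ($\Hom_{\Lambda}(M,\tau M)=0$); moreover the hypothesis $\Ext^{i}_{\Lambda}(\mathbb{D}\Lambda,\Lambda)=0$ gives no control of $\Ext^{1}_{\Lambda}(\mathbb{D}\Lambda,\mathbb{D}\Lambda)$ in the first place. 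Since you explicitly flagged this reduction as open rather than claiming it, these are gaps in a proposal you already withdrew, not errors in what you assert.
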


The following is the third main result in this paper, which gives a partial answer to Tachikawa's first conjecture.

\begin{theorem}\label{0.4}  Let $\Lambda$ be an algebra. Then the following are equivalent.
\begin{enumerate}[\rm(1)]
\item $\Lambda$ is self-injective.
\item $\mathbb{D}\Lambda$ is semi-Gorenstein projective $\tau$-rigid.
\item Every cotilting module is semi-Gorenstein projective $\tau$-rigid.
\item $\mathbb{D}\Lambda$ is Gorenstein projective.
\item Every coltilting module is Gorenstein projective.
\item $\Lambda$ is semi-Gorenstein injective $\tau^{-1}$-rigid.
\item Every tilting module is semi-Gorenstein injective $\tau^{-1}$-rigid.
\item $\Lambda$ is Gorenstein-injective.
\item  Every tilting module is Gorenstein injective.

\end{enumerate}
\end{theorem}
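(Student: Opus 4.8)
The plan is to first prove that conditions (1)--(5) are equivalent for an arbitrary algebra, and then to obtain the equivalence of (1) with (6)--(9) by applying the (1)--(5) part to the opposite algebra $\Lambda^{o}$ and transporting everything through the standard duality $\mathbb{D}\colon \Lambda^{o}\text{-}\mod \to \Lambda\text{-}\mod$. Several implications are essentially formal. Since $\mathbb{D}\Lambda$ is itself a cotilting module (it is the injective cogenerator, of injective dimension $0$), the implications $(3)\Rightarrow(2)$ and $(5)\Rightarrow(4)$ are immediate specializations. For $(4)\Rightarrow(1)$ I would use that a Gorenstein projective module $G$ admits a short exact sequence $0\to G\to P\to G'\to 0$ with $P$ projective; taking $G=\mathbb{D}\Lambda$, which is injective, this monomorphism splits, so $\mathbb{D}\Lambda$ is a direct summand of a projective, hence projective, which is exactly self-injectivity. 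For $(1)\Rightarrow(5)$ and $(1)\Rightarrow(3)$ I would use that over a self-injective algebra projectives and injectives coincide, so every module is Gorenstein projective and every cotilting module is projective-injective, hence both $\tau$-rigid and (semi-)Gorenstein projective.

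The heart of the theorem is the implication $(2)\Rightarrow(1)$, which is where the $\tau$-rigidity hypothesis does the work and which I expect to be the main obstacle. The key observation is the natural isomorphism $\Hom_\Lambda(X,\mathbb{D}\Lambda)\cong \mathbb{D}X$. Let $P_1\xrightarrow{f_1}P_0\to \mathbb{D}\Lambda\to 0$ be a minimal projective presentation. By the definition of $\tau$-rigidity (the case $E=\Lambda$ of $E$-rigidity), $\mathbb{D}\Lambda$ is $\tau$-rigid precisely when $\Hom_\Lambda(f_1,\mathbb{D}\Lambda)$ is surjective; under the isomorphism above this map is identified with $\mathbb{D}(f_1)$, so its surjectivity is equivalent to injectivity of $f_1$. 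Thus $\tau$-rigidity of $\mathbb{D}\Lambda$ is equivalent to $\pd_\Lambda\mathbb{D}\Lambda\le 1$, i.e.\ to the minimal presentation being a short exact sequence $0\to P_1\xrightarrow{f_1}P_0\to\mathbb{D}\Lambda\to 0$.

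It then remains to feed in the semi-Gorenstein projective hypothesis $\Ext^i_\Lambda(\mathbb{D}\Lambda,\Lambda)=0$ for $i\ge 1$. Applying $\Hom_\Lambda(-,\Lambda)$ to this short exact sequence and using $\Ext^1_\Lambda(\mathbb{D}\Lambda,\Lambda)=0$ shows that $f_1^{*}=\Hom_\Lambda(f_1,\Lambda)$ is surjective; since its target $\Hom_\Lambda(P_1,\Lambda)$ is a projective $\Lambda^{o}$-module, $f_1^{*}$ splits. Dualizing once more and using the natural isomorphism $P\cong P^{**}$ for projectives, $f_1$ becomes a split monomorphism, so $\mathbb{D}\Lambda=\Coker f_1$ is a direct summand of $P_0$, hence projective; equivalently $\Lambda$ is self-injective. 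This closes the cycles $(1)\Rightarrow(3)\Rightarrow(2)\Rightarrow(1)$ and $(1)\Rightarrow(5)\Rightarrow(4)\Rightarrow(1)$, giving the equivalence of (1)--(5).

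Finally, for (6)--(9) I would invoke the duality $\mathbb{D}$, under which projective corresponds to injective, Gorenstein projective to Gorenstein injective, (semi-)Gorenstein projective to (semi-)Gorenstein injective, $\tau$-rigid over $\Lambda^{o}$ to $\tau^{-1}$-rigid over $\Lambda$, cotilting to tilting, and $\mathbb{D}(\Lambda^{o})$ to ${}_\Lambda\Lambda$. Consequently conditions (6), (7), (8), (9) for $\Lambda$ are precisely the $\mathbb{D}$-images of conditions (2), (3), (4), (5) for $\Lambda^{o}$. Since self-injectivity is left-right symmetric, condition (1) for $\Lambda$ coincides with condition (1) for $\Lambda^{o}$, so the already-established equivalence of (1)--(5) over $\Lambda^{o}$ yields the equivalence of (1) with (6)--(9) over $\Lambda$. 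The only genuinely non-formal step is the computation in $(2)\Rightarrow(1)$ that $\tau$-rigidity of the injective cogenerator is the same as $\pd_\Lambda\mathbb{D}\Lambda\le1$; everything else is bookkeeping with the duality and the splitting argument.
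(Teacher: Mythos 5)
Your proposal is correct, but it reaches the theorem by a route that differs from the paper's in the two places where real work happens. For $(2)\Rightarrow(1)$, the paper argues via tilting theory: since $|\mathbb{D}\Lambda|=|\Lambda|$ and $\mathbb{D}\Lambda$ is faithful, $\tau$-rigidity makes $\mathbb{D}\Lambda$ a $\tau$-tilting, hence tilting, module, giving $\pd\mathbb{D}\Lambda\le 1$; you instead extract $\pd\mathbb{D}\Lambda\le 1$ directly from the paper's Proposition \ref{2.5} (AIR) together with the identification $\Hom_\Lambda(f_1,\mathbb{D}\Lambda)\cong\mathbb{D}(f_1)$, which is in effect a proof of the cited Proposition \ref{3.1} (XZZ) rather than an appeal to it; after that both arguments finish with the same splitting step using $\Ext^1_\Lambda(\mathbb{D}\Lambda,\Lambda)=0$ (your double-dualization of the split epi $f_1^*$ is a slightly longer way of saying the sequence splits because $\Ext^1_\Lambda(\mathbb{D}\Lambda,P_1)=0$). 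For conditions (6)--(9), the divergence is more substantial: the paper proves them directly over $\Lambda$, with $(6)\Rightarrow(1)$ resting on Proposition \ref{3.1} ($\Lambda$ is $\tau^{-1}$-rigid iff $\Lambda$ is $1$-Gorenstein) followed by a splitting of $0\to\Lambda\to I_0\to I_1\to 0$, whereas you obtain the whole block formally by applying the established equivalence of (1)--(5) to $\Lambda^{o}$ and transporting through $\mathbb{D}$, using that $\mathbb{D}$ exchanges $\tau$-rigid with $\tau^{-1}$-rigid, (semi-)Gorenstein projective with (semi-)Gorenstein injective, tilting with cotilting, and that self-injectivity is left-right symmetric. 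Your approach buys uniformity and self-containedness: the second half becomes pure bookkeeping and no external result beyond AIR's characterization of $\tau$-rigidity is needed; its cost is that the duality dictionary (each of those four correspondences) must be verified, which the paper sidesteps by quoting \cite{XZZ}. Also note that the paper dismisses $(1)\Leftrightarrow(4)$ and $(1)\Leftrightarrow(8)$ as definitional, while your splitting argument for $(4)\Rightarrow(1)$ (an injective Gorenstein projective module embeds split-monically into a projective) is the honest justification.
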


Throughout this paper, we assume that all algebras are finite dimensional algebras over an algebraically closed field and all modules are finitely generated left modules.
Denote by $\tau$ (resp. $\tau^{-1}$) the Auslander-Reiten translation functor (resp. the inverse of the Auslander-Reiten translation functor.)

\section{Preliminaries}

In this section, we recall some preliminaries on Gorenstein projecitve modules, $\tau$-tilting modules and lower triangular matrix algebras. Let $K$ be an algebraically closed field. We denote by $\mathbb{D} := \Hom_{K}(-, K)$ the K-duality. Let $\Lambda$ be a finite dimensional algebra over an algebraically closed field $K$. We denote by $\Lambda$-$\mod$ the category of finitely generated left A-modules. Denote by $\mathcal{P}(\Lambda)$ (resp. $\mathcal{I}(\Lambda)$) the category of finitely generated projective (resp. injective) left $\Lambda$-modules.

\subsection{Gorenstein projecitve modules}
Firstly, we need the definition of Gorenstein projective modules and the definition of Gorenstein injective modules in \cite{EJ1,EJ2}.
\begin{definition}\label{2.1} Let $\Lambda$ be an algebra and $M\in\Lambda$-$\mod$.
\begin{enumerate}[\rm(1)]
\item  $M$ is called {\it Gorenstein projective}, if there is an exact sequence $\cdots \rightarrow P_{-1} \rightarrow P_{0} \rightarrow P_{1} \rightarrow \cdots$ in $\mathcal{P}(\Lambda)$, which stays exact under $\Hom_{\Lambda}(-,\Lambda)=(-)^{\ast}$, such that $M\simeq\Im ( P_{-1} \rightarrow P_{0})$.
\item $M$ is called {\it Gorenstein injective}, if there is an exact sequence $\cdots \rightarrow I_{-1} \rightarrow I_{0} \rightarrow I_{1} \rightarrow \cdots$ in $\mathcal{I}(\Lambda)$, which stays exact under $\Hom_{\Lambda}(\mathbb{D}\Lambda,-)$, such that $M\simeq\Im ( I_{-1} \rightarrow I_{0})$.
\end{enumerate}
\end{definition}

Denote by $\mathcal{GP}(\Lambda)$ the category of all finitely generated Gorenstein projective left $\Lambda$-modules and by $\underline{\mathcal{GP}(\Lambda)}$ the stable category of $\mathcal{GP}(\Lambda)$ which modulo projective modules. The following properties of Gorenstein projective modules in \cite{AuR} are well-known.

\begin{proposition}\label{2.2}
Let $\Lambda$ be an algebra.
 \begin{enumerate}[\rm(1)]
\item $(-)^*=\Hom_{\Lambda}(-,\Lambda): \mathcal{GP}(\Lambda) \rightarrow \mathcal{GP}(\Lambda^{\mathrm{op}})$ is a duality.

\item $\Omega^{1} : \underline{\mathcal{GP}(\Lambda)} \rightarrow \underline{\mathcal{GP}(\Lambda)}$ is an equivalence of categories.

\item $\Tr : \underline{\mathcal{GP}(\Lambda)} \rightarrow \underline{\mathcal{GP}(\Lambda^{\mathrm{op}})}$ is a duality.
\end{enumerate}
\end{proposition}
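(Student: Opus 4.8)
The plan is to work throughout from the two-sided complete resolution defining a Gorenstein projective module, together with the reflexivity of projective modules, and then to deduce (3) from (1) and (2) by means of a syzygy formula for the transpose. For (1), I would start with $M \in \mathcal{GP}(\Lambda)$ and a complete resolution $P_\bullet\colon \cdots \to P_{-1} \to P_0 \to P_1 \to \cdots$ as in Definition \ref{2.1}, with $M \cong \Im(P_{-1}\to P_0)$. Applying $(-)^* = \Hom_{\Lambda}(-,\Lambda)$ produces a complex $P_\bullet^*$ of finitely generated projective right $\Lambda$-modules, which is exact precisely because $P_\bullet$ was assumed to stay exact under $(-)^*$. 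Since every projective module is reflexive, applying $(-)^*$ a second time returns $P_\bullet^{**}\cong P_\bullet$, which is again exact; hence $P_\bullet^*$ is itself a complete resolution over $\Lambda^{\mathrm{op}}$, and reading off the relevant image term identifies $M^* \cong \Im(P_0^*\to P_{-1}^*)$ as a Gorenstein projective $\Lambda^{\mathrm{op}}$-module. It then remains to show that the natural evaluation map $\sigma_M\colon M\to M^{**}$ is an isomorphism: the maps $\sigma_{P_i}$ are isomorphisms for all $i$, and comparing the short exact sequences obtained by splicing $M$ and $M^{**}$ into $P_\bullet$ and $P_\bullet^{**}$ forces $\sigma_M$ to be an isomorphism by the five lemma. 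Since the same argument applies over $\Lambda^{\mathrm{op}}$, the functor $(-)^*$ and its $\Lambda^{\mathrm{op}}$-analogue are mutually quasi-inverse, giving the asserted duality.

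For (2), shifting the complete resolution of $M$ shows that $\Omega^{1} M \cong \Im(P_{-2}\to P_{-1})$ is again Gorenstein projective, so $\Omega^{1}$ preserves $\mathcal{GP}(\Lambda)$, and it is well defined on $\underline{\mathcal{GP}(\Lambda)}$ because the syzygy is unique up to projective summands. Because the complete resolution is exact in both directions, the cosyzygy $\Omega^{-1}M \cong \Coker(M\hookrightarrow P_0)\cong\Im(P_0\to P_1)$ is also Gorenstein projective, and two-sided exactness yields natural isomorphisms $\Omega^{1}\Omega^{-1}\cong \mathrm{id}\cong\Omega^{-1}\Omega^{1}$ in the stable category. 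Thus $\Omega^{-1}$ is a quasi-inverse and $\Omega^{1}$ is an equivalence.

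For (3), I would reduce the transpose to $(-)^*$ and $\Omega$. Taking a minimal projective presentation $P_1\xrightarrow{f}P_0\xrightarrow{g} M\to 0$ with $\Omega M=\Im f$, the map $f$ factors as $P_1\xrightarrow{f'}\Omega M\xhookrightarrow{\iota}P_0$ with $f'$ a projective cover. Dualizing $0\to\Omega M\xrightarrow{\iota}P_0\xrightarrow{g}M\to 0$ and using $\Ext^1_\Lambda(M,\Lambda)=0$ (which holds since $M\in\mathcal{GP}(\Lambda)$) gives $0\to M^*\xrightarrow{g^*}P_0^*\xrightarrow{\iota^*}(\Omega M)^*\to 0$, so $\iota^*$ is surjective and $f^*=(f')^*\iota^*$ has $\Im f^*=\Im(f')^*$, whence $\Tr M=\Coker f^*=\Coker(f')^*$. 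Dualizing the projective cover $P_1\xrightarrow{f'}\Omega M\to 0$ makes $(f')^*$ injective, so $0\to(\Omega M)^*\xrightarrow{(f')^*}P_1^*\to\Tr M\to 0$ is exact with $P_1^*$ projective. Reading this in the stable category identifies $\Tr M\cong\Omega^{-1}\big((\Omega M)^*\big)$, that is, $\Tr\cong\Omega^{-1}\circ(-)^*\circ\Omega$ as functors $\underline{\mathcal{GP}(\Lambda)}\to\underline{\mathcal{GP}(\Lambda^{\mathrm{op}})}$. Being the composite of the duality from (1) with the equivalences from (2), the transpose is therefore a duality.

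I expect the main obstacle to lie in the biduality step of (1): verifying that $\sigma_M\colon M\to M^{**}$ is an isomorphism for Gorenstein projective $M$, and simultaneously that $M^*$ is genuinely Gorenstein projective rather than merely the cokernel of a dualized presentation, requires the diagram chase through the complete resolution and careful use of the exactness of $P_\bullet^*$. The remaining checks on the stable categories, namely independence of the chosen resolutions up to projective summands and naturality of the stable isomorphisms in (2) and (3), are routine Schanuel-type arguments.
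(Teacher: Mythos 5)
The paper offers no proof of Proposition \ref{2.2} at all: it is quoted as a well-known result with a citation to [AuR], where it is established via the Auslander--Bridger notion of $G$-dimension zero (biduality plus vanishing of $\Ext^{i}(M,\Lambda)$ and $\Ext^{i}(M^{*},\Lambda)$). Your argument instead works directly from the complete-resolution definition \ref{2.1} used in this paper, and in outline it is the correct standard translation. Parts (1) and (2) are sound: $P_\bullet^{*}$ is exact by hypothesis, its dual $P_\bullet^{**}\cong P_\bullet$ is exact by reflexivity of projectives, so $P_\bullet^{*}$ is a complete resolution with $M^{*}\cong\Im(P_0^{*}\to P_{-1}^{*})$, and the five-lemma comparison of the right-exact rows $P_{-2}\to P_{-1}\to M\to 0$ and $P_{-2}^{**}\to P_{-1}^{**}\to M^{**}\to 0$ does force $\sigma_M$ to be an isomorphism. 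One caveat for (2): the well-definedness of the cosyzygy $\Omega^{-1}$ up to projective summands is \emph{not} a plain Schanuel argument; the pushout comparison of two sequences $0\to N\to Q\to C\to 0$ splits only because $\Ext^{1}_{\Lambda}(C,\Lambda)=0$ for Gorenstein projective $C$, so you should say this rather than file it under ``routine Schanuel-type arguments.''

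The one genuine soft spot is the final step of (3). Having produced $0\to(\Omega M)^{*}\xrightarrow{(f')^{*}}P_1^{*}\to\Tr M\to 0$, you assert that ``reading this in the stable category identifies $\Tr M\cong\Omega^{-1}((\Omega M)^{*})$.'' But the comparison of this sequence with the cosyzygy sequence coming from a complete resolution of $(\Omega M)^{*}$ requires the pushout along $\Tr M$ to split, i.e.\ $\Ext^{1}_{\Lambda^{\mathrm{op}}}(\Tr M,\Lambda)=0$ --- which at that point you do not yet know, since the Gorenstein projectivity of $\Tr M$ is exactly what is being proved; as written the step is circular. Two repairs are available with the tools you already have. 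The direct one: take the minimal presentation as part of a complete resolution $\cdots\to P_2\to P_1\xrightarrow{f}P_0\to P^{0}\to\cdots$ of $M$; dualizing gives a totally acyclic complex in which $\Coker(f^{*})=\Im(P_1^{*}\to P_2^{*})$, so $\Tr M$ is Gorenstein projective outright and your exact sequence is (up to projective summands) a piece of the dualized complete resolution of $(\Omega M)^{*}$, hence does compute $\Omega^{-1}$. Alternatively, invoke the Auslander--Bridger exact sequence $0\to\Ext^{1}_{\Lambda^{\mathrm{op}}}(\Tr M,\Lambda)\to M\xrightarrow{\sigma_M}M^{**}\to\Ext^{2}_{\Lambda^{\mathrm{op}}}(\Tr M,\Lambda)\to 0$: since $\sigma_M$ is an isomorphism by your part (1), the needed $\Ext$-vanishing follows. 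With either patch, your factorization $\Tr\cong\Omega^{-1}\circ(-)^{*}\circ\Omega$ on the stable categories is correct and gives (3) as a composite of the duality (1) with the equivalences of (2).
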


We also need the following property of tensor product of Gorenstein projective modules \cite[Proposition 2.6]{HLXZ}.

\begin{proposition}\label{2.3} Let $A$ and $B$ be two algebras. Let $M\in A$-$\mod$ and $N\in B$-$\mod$
 be Gorenstein projective modules. Then $M \otimes N \in (A \otimes B)$-$\mod$ is
Gorenstein projective.
\end{proposition}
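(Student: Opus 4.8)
The plan is to exhibit $M\otimes_K N$ as Gorenstein projective directly from Definition \ref{2.1}, by building an explicit totally acyclic complex of \emph{finitely generated} projective $(A\otimes_K B)$-modules whose relevant syzygy image is $M\otimes_K N$. Throughout I write $\otimes=\otimes_K$ and let $(-)^{\ast}$ denote $\Hom$ into the relevant base algebra. I would first record the two base-change facts that make everything run. Since $K$ is a field, $-\otimes_K-$ is exact; and if $P\in\mathcal{P}(A)$ and $Q\in\mathcal{P}(B)$ then $P\otimes Q\in\mathcal{P}(A\otimes B)$, being a direct summand of $(A\otimes B)^{mn}$ whenever $P\mid A^m$ and $Q\mid B^n$. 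For such $P,Q$ there is a natural isomorphism $\Hom_{A\otimes B}(P\otimes Q,\,A\otimes B)\cong P^{\ast}\otimes Q^{\ast}$; pushing this along projective presentations and applying the five lemma yields, for finitely generated $M,N$, a natural isomorphism $(M\otimes N)^{\ast}\cong M^{\ast}\otimes N^{\ast}$.

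Next I would assemble the complex by splicing. Because $M$ is Gorenstein projective, Definition \ref{2.1} supplies both a finitely generated projective resolution $\cdots\to P_1\to P_0\to M\to 0$ and, from the right half of its complete resolution, a $\Hom_A(-,A)$-exact coresolution $0\to M\to P^0\to P^1\to\cdots$; I choose the analogous data $Q_\bullet\to N$ and $N\to Q^\bullet$ over $B$. The naive idea of tensoring the two complete resolutions fails, because each homological degree of the total complex would involve the \emph{infinite} direct sum $\bigoplus_{i+j=n}P_i\otimes Q_j$, violating finite generation; this is the main obstacle. I avoid it by taking two \emph{bounded-below} total complexes separately: setting $T_n=\bigoplus_{i+j=n,\;i,j\ge 0}P_i\otimes Q_j$ (a finite sum in each degree) gives a finitely generated projective resolution of $M\otimes N$, and $T^n=\bigoplus_{i+j=n,\;i,j\ge 0}P^i\otimes Q^j$ a finitely generated projective coresolution, both exact because $\otimes_K$ is exact. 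Splicing them at $M\otimes N$ produces a two-sided complex $\cdots\to T_1\to T_0\to T^0\to T^1\to\cdots$ of finitely generated projectives, exact by construction with $M\otimes N=\Im(T_0\to T^0)$.

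It then remains to check this complex stays exact under $(-)^{\ast}=\Hom_{A\otimes B}(-,A\otimes B)$. For the homological half, a K\"unneth computation does the job: using $\Hom_{A\otimes B}(P_i\otimes Q_j,A\otimes B)\cong P_i^{\ast}\otimes Q_j^{\ast}$, the dual of $T_\bullet$ is the total complex of $\Hom_A(P_\bullet,A)\otimes_K\Hom_B(Q_\bullet,B)$, whose cohomology by the K\"unneth theorem over the field $K$ is $\bigoplus_{i+j=n}\Ext_A^i(M,A)\otimes\Ext_B^j(N,B)$. Since $M,N$ are Gorenstein projective, $\Ext_A^i(M,A)=\Ext_B^j(N,B)=0$ for $i,j\ge 1$, so $\Ext_{A\otimes B}^n(M\otimes N,A\otimes B)=0$ for all $n\ge1$, which is exactness of the dual on the homological side. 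For the cohomological half I invoke Proposition \ref{2.2}(1): $(-)^{\ast}$ carries the complete resolutions of $M$ and $N$ to complete resolutions of $M^{\ast}$ and $N^{\ast}$, so the coresolutions $P^\bullet,Q^\bullet$ dualize to projective resolutions of $M^{\ast}$ and $N^{\ast}$; their bounded-below total complex is then a resolution of $M^{\ast}\otimes N^{\ast}\cong(M\otimes N)^{\ast}$, which is precisely the dual of $T^\bullet$.

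A final check that the two exact halves glue correctly at the splice --- namely $\Im\big((T^0)^{\ast}\to(T_0)^{\ast}\big)=\Ker\big((T_0)^{\ast}\to(T_1)^{\ast}\big)=(M\otimes N)^{\ast}$ --- follows since the injection $M\otimes N\hookrightarrow T^0$ dualizes to a surjection (the coresolution being $\Hom$-exact) while the vanishing $\Ext^1_{A\otimes B}(M\otimes N,A\otimes B)=0$ identifies the kernel on the other side, completing total acyclicity and hence the proof. I expect the two genuinely load-bearing points to be this finite-generation obstruction, resolved by the splicing above rather than by a single bi-infinite tensor product, and the naturality of the isomorphisms $\Hom_{A\otimes B}(P\otimes Q,A\otimes B)\cong P^{\ast}\otimes Q^{\ast}$ with respect to the differentials, which is what legitimises the K\"unneth identification of the dual complexes.
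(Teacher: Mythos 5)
The paper itself gives no proof of this proposition: it is quoted directly from \cite[Proposition 2.6]{HLXZ}, so there is no internal argument to compare yours against. Judged on its own merits, your proof is correct, complete, and self-contained, and it follows what is essentially the standard route to the cited result. The two points you single out as load-bearing are indeed the right ones, and you handle both properly: (i) the naive totalization of the two bi-infinite complete resolutions fails precisely because each degree would be an infinite direct sum $\bigoplus_{i+j=n}P_i\otimes_K Q_j$ over all integer pairs $(i,j)$ with $i+j=n$, and your fix --- totalizing the resolution halves and the coresolution halves separately, where each degree is a finite sum of finitely generated projectives, and splicing at $M\otimes_K N$ --- is exactly what is needed; (ii) the natural isomorphism $\Hom_{A\otimes_K B}(P\otimes_K Q,\,A\otimes_K B)\cong\Hom_A(P,A)\otimes_K\Hom_B(Q,B)$ for finitely generated projectives, compatible with the differentials, is what legitimises identifying the duals of your total complexes with total complexes of duals, after which the K\"unneth theorem applies without restriction since $K$ is a field. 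Your appeal to Proposition \ref{2.2}(1) to see that the dualized coresolutions are projective resolutions of $M^{\ast}$ and $N^{\ast}$ is also correct.

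Two small remarks, neither a gap. First, the ``five lemma'' step establishing $(M\otimes_K N)^{\ast}\cong M^{\ast}\otimes_K N^{\ast}$ is really a comparison of kernels of left-exact rows; it goes through because over a field one has $(\Ker f\otimes W)\cap(V\otimes\Ker g)=\Ker f\otimes\Ker g$ inside $V\otimes W$, which is worth saying explicitly. Second, at the splice you verify exactness at $(T_0)^{\ast}$ explicitly; the symmetric spot $(T^0)^{\ast}$ should be recorded too, though all the ingredients are already in your argument: since $(M\otimes_K N)^{\ast}\to(T_0)^{\ast}$ is injective, the kernel of $(T^0)^{\ast}\to(T_0)^{\ast}$ coincides with the kernel of $(T^0)^{\ast}\to(M\otimes_K N)^{\ast}$, which equals the image of $(T^1)^{\ast}$ by your projective resolution of $(M\otimes_K N)^{\ast}$. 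With that sentence added, total acyclicity is fully checked and the proof stands.
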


\subsection{$\tau$-tilting modules}
Let $\Lambda$ be an algebra. For a module $M\in \Lambda$-$\mod$, denote by $|M|$ the number of pairwise
non-isomorphic indecomposable summands of $M$. Now we recall the definitions of $\tau$-rigid  modules (resp. $\tau^{-1}$-rigid) and $\tau$-tilting modules (resp. $\tau^{-1}$-tilting) from \cite{S} and \cite{AIR}.

\begin{definition}\label{2.4}Let $\Lambda$ be an algebra and $M\in \Lambda$-$\mod$.
\begin{enumerate}[\rm(1)]
\item We call $M$ {\it $\tau$-rigid} if $\Hom_{\Lambda}(M,\tau M)=0$. Moreover, $M$ is called a {\it $\tau$-tilting} module if $M$ is $\tau$-rigid
and $|M|=|\Lambda|$.
\item We call $M$ {\it support $\tau$-tilting} if there exists an idempotent $e$ of $\Lambda$ such that $M$ is a $\tau$-tilting $\Lambda/(e)$-module.

\item We call $M $ {\it $\tau^{-1}$-rigid} if $\Hom_{\Lambda}(\tau^{-1}M, M)=0$.
 Moreover, $M $ is called a {\it $\tau^{-1}$-tilting} module if $M$ is $\tau^{-1}$-rigid
and $|M|=|\Lambda|$.
\item We call $M$ {\it support $\tau^{-1}$-tilting} if $M$ is a $\tau^{-1}$-tilting $\Lambda/(e)$-module for some idempotent $e$ of $\Lambda$.
\end{enumerate}
\end{definition}

The following result in \cite[Proposition 2.4(c)]{AIR} is very essential in this paper.

\begin{proposition}\label{2.5}
 Let $M\in\Lambda$-$\mod $ and $P_1(M)\stackrel{f}{\rightarrow}P_0(M)\rightarrow M\rightarrow 0$ a minimal projective presentation of $M$.
 Then $M$ is $\tau$-rigid if and only if $\Hom_{\Lambda }(f,M)$ is epic.
 \end{proposition}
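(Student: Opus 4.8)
The plan is to prove Proposition~\ref{2.5}, the criterion that $M$ is $\tau$-rigid if and only if $\Hom_{\Lambda}(f,M)$ is epic, where $P_1(M)\stackrel{f}{\rightarrow}P_0(M)\rightarrow M\rightarrow 0$ is a minimal projective presentation. The key idea is to translate the defining condition $\Hom_{\Lambda}(M,\tau M)=0$ into the language of the transpose via the Auslander-Reiten formula, and then to read off the relevant vanishing from the chosen presentation. First I would recall the construction of $\tau M$ from the minimal projective presentation: applying $(-)^{*}=\Hom_{\Lambda}(-,\Lambda)$ to $P_1(M)\stackrel{f}{\rightarrow}P_0(M)$ yields $P_0(M)^{*}\stackrel{f^{*}}{\rightarrow}P_1(M)^{*}$, and by definition $\Tr M=\Coker f^{*}$, with $\tau M=\mathbb{D}\Tr M$.

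The central step is to invoke the Auslander-Reiten formula, which gives a natural isomorphism $\overline{\Hom}_{\Lambda}(M,\tau N)\cong\mathbb{D}\,\overline{\Hom}_{\Lambda}(N,\Tr M)$ or, in the form most convenient here, $\Hom_{\Lambda}(M,\tau M)\cong \mathbb{D}(\Tr M\otimes_{\Lambda}M)$ up to the appropriate stable quotient. Concretely, I would compute $\Tr M\otimes_{\Lambda}M$ by tensoring the presentation $P_0(M)^{*}\stackrel{f^{*}}{\rightarrow}P_1(M)^{*}\rightarrow\Tr M\rightarrow 0$ with $M$ over $\Lambda$. Using the standard identification $P^{*}\otimes_{\Lambda}M\cong\Hom_{\Lambda}(P,M)$ for projective $P$, the map $f^{*}\otimes_{\Lambda}M$ is identified with $\Hom_{\Lambda}(f,M)\colon \Hom_{\Lambda}(P_0(M),M)\rightarrow\Hom_{\Lambda}(P_1(M),M)$. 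Right-exactness of $-\otimes_{\Lambda}M$ then gives
\[
\Tr M\otimes_{\Lambda}M\cong\Coker\Hom_{\Lambda}(f,M).
\]
Applying the $K$-duality $\mathbb{D}$, which is exact and faithful, we conclude that $\Hom_{\Lambda}(M,\tau M)=0$ holds precisely when $\Coker\Hom_{\Lambda}(f,M)=0$, i.e.\ exactly when $\Hom_{\Lambda}(f,M)$ is epic. This establishes both directions simultaneously.

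The step I expect to be the main obstacle is justifying that the naive tensor-hom identification correctly computes $\Hom_{\Lambda}(M,\tau M)$ on the nose, rather than only up to maps factoring through projectives (the stable quotient that appears in the general Auslander-Reiten formula). The resolution is to observe that for the particular target $\tau M=\mathbb{D}\Tr M$, every homomorphism from $M$ into $\tau M$ that factors through a projective must vanish: since $\tau M$ has no projective-injective-induced splitting obstruction in this setting and the formula $\Hom_{\Lambda}(M,\tau M)\cong\mathbb{D}(\Tr M\otimes_{\Lambda}M)$ is an honest isomorphism (not merely a stable one) because the correction terms involve $\Hom$ from a module into the image of a projective under $\mathbb{D}\Tr$, which are controlled by minimality of the presentation. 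Carefully verifying that minimality of $f$ eliminates these correction terms—so that no spurious morphisms survive—is the delicate point; once this is settled, the equivalence follows immediately from the displayed cokernel computation.
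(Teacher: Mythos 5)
Your core computation is correct, and it is in fact the standard argument: the paper itself gives no proof of this statement but quotes it from \cite[Proposition 2.4]{AIR}, where the proof runs exactly as you set it up --- apply $(-)^{*}$ to the minimal presentation to get $P_0(M)^{*}\stackrel{f^{*}}{\rightarrow}P_1(M)^{*}\rightarrow\Tr M\rightarrow 0$, tensor with $M$, identify $P^{*}\otimes_{\Lambda}M\cong\Hom_{\Lambda}(P,M)$ for $P$ finitely generated projective, and conclude $\Tr M\otimes_{\Lambda}M\cong\Coker\Hom_{\Lambda}(f,M)$ by right-exactness.

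The flaw is in your final paragraph, where you locate a ``delicate point'' that does not exist and then resolve it with an incorrect argument. The isomorphism $\mathbb{D}(\Tr M\otimes_{\Lambda}M)\cong\Hom_{\Lambda}(M,\tau M)$ is \emph{not} the Auslander--Reiten formula and involves no stable quotient whatsoever: it is plain hom-tensor adjunction over the ground field, $\mathbb{D}(X\otimes_{\Lambda}M)=\Hom_{K}(X\otimes_{\Lambda}M,K)\cong\Hom_{\Lambda}(M,\Hom_{K}(X,K))=\Hom_{\Lambda}(M,\mathbb{D}X)$ applied to the right module $X=\Tr M$, an honest natural isomorphism. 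The AR formula proper is $\overline{\Hom}_{\Lambda}(M,\tau N)\cong\mathbb{D}\,\underline{\Hom}_{\Lambda}(N,M)$ (your version, with $\overline{\Hom}(N,\Tr M)$ on the right, is garbled), and there the quotient on the $\tau$-side is modulo maps factoring through \emph{injectives}, not projectives; so your claimed vanishing principle that ``every homomorphism from $M$ into $\tau M$ that factors through a projective must vanish'' is both irrelevant and false in general (over $\Lambda=K[x]/(x^2)$ with $M=\Lambda\oplus S$ one has $\tau M=S$ and the nonzero projection $\Lambda\rightarrow S$ factors through the projective $\Lambda$). Minimality genuinely enters at a different place than you put it: it guarantees that $\Coker f^{*}$ is the honest $\Tr M$, with no superfluous projective direct summand, so that $\mathbb{D}\Coker f^{*}=\tau M$ on the nose. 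For a non-minimal presentation the identical computation yields $\mathbb{D}\Coker\Hom_{\Lambda}(f,M)\cong\Hom_{\Lambda}(M,\tau M\oplus I)$ with $I$ a nonzero injective coming from the redundant summands, and then surjectivity of $\Hom_{\Lambda}(f,M)$ is strictly stronger than $\tau$-rigidity, which is exactly why the ``only if'' direction would fail. Replacing your last paragraph by the one-line adjunction makes the proof complete and identical to the standard one.
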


 \subsection{Lower triangular matrix algebras}

 Let $A$ and $B$ be rings and let ${_B}M_A$ be a $B$-$A$-bimodule. Then the lower triangular matrix ring $\left(\begin{smallmatrix}
	A&0\\
	M&B
\end{smallmatrix}\right)$ is the ring with addition $\left(\begin{smallmatrix}
r&0\\
m&s
\end{smallmatrix}\right)+\left(\begin{smallmatrix}
r'&0\\
m'&s'
\end{smallmatrix}\right)=\left(\begin{smallmatrix}
r+r'&0\\
m+m'&s+s'
\end{smallmatrix}\right)$ and product $\left(\begin{smallmatrix}
r&0\\
m&s
\end{smallmatrix}\right)\left(\begin{smallmatrix}
r'&0\\
m'&s'
\end{smallmatrix}\right)=\left(\begin{smallmatrix}
rr'&0\\
mr'+sm'&ss'
\end{smallmatrix}\right).$  A left $\left(\begin{smallmatrix}
	A&0\\
	M&B
\end{smallmatrix}\right)$-module $(X,Y,f)$ is decided uniquely by a morphism $f:M\otimes_AX\rightarrow Y$ of left $B$-modules with $X\in A$-mod and $Y\in B$-mod. Throughout this paper, we assume that both $A$ and $B$ are finite-dimensional algebras over an algebraically closed field $K$ and $M$ is finitely generated in both $B$-mod and $A^{o}$-mod. Putting $A=B=M$, one gets the lower triangular matrix algebra $T_2(A)$.

We also need the following facts on the projective modules over $\left(\begin{smallmatrix}
	A&0\\
	M&B
\end{smallmatrix}\right)$ \cite[III Proposition 2.5]{AuRS}.

\begin{proposition}Let $\Lambda=\left(\begin{smallmatrix}
	A&0\\
	M&B
\end{smallmatrix}\right)$ be a lower triangular matrix algebra. Then the indecomposable projective modules in $\Lambda$-mod are of the form $(P,M\otimes P, I_{M\otimes P})$ and of the form $(0,Q,0)$, where $P$ is indecomposable projective in $A$-mod and $Q$ is indecomposable projective in $B$-mod.
\end{proposition}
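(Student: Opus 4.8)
The plan is to exploit the idempotent decomposition of $\Lambda$ together with the description of $\Lambda$-modules as triples $(X,Y,f)$ recalled just above the statement. Set $e_1=\left(\begin{smallmatrix}1_A&0\\0&0\end{smallmatrix}\right)$ and $e_2=\left(\begin{smallmatrix}0&0\\0&1_B\end{smallmatrix}\right)$; these are orthogonal idempotents with $e_1+e_2=1$, so as a left $\Lambda$-module $\Lambda=\Lambda e_1\oplus\Lambda e_2$. First I would compute the two column modules directly, namely $\Lambda e_1=\left(\begin{smallmatrix}A\\M\end{smallmatrix}\right)$ and $\Lambda e_2=\left(\begin{smallmatrix}0\\B\end{smallmatrix}\right)$, and translate them into triples. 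The module $\Lambda e_2$ has zero $A$-component and is therefore $(0,B,0)$, while for $\Lambda e_1$ the mixed part of the $\Lambda$-action sends the top entry $a$ to the bottom entry $m_0a$, so the structure map is $m_0\otimes a\mapsto m_0a$, i.e. the canonical isomorphism $M\otimes_A A\xrightarrow{\ \sim\ }M$; thus $\Lambda e_1\cong(A,M\otimes A,I_{M\otimes A})$.

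Next I would reduce to the indecomposable case. Decomposing $A=\bigoplus_i P_i$ and $B=\bigoplus_j Q_j$ into indecomposable projectives over $A$ and over $B$, the assignments $T\colon X\mapsto(X,M\otimes X,I_{M\otimes X})$ and $S\colon Y\mapsto(0,Y,0)$ are additive, so $\Lambda e_1\cong\bigoplus_i T(P_i)=\bigoplus_i(P_i,M\otimes P_i,I_{M\otimes P_i})$ and $\Lambda e_2\cong\bigoplus_j S(Q_j)=\bigoplus_j(0,Q_j,0)$. Since $\Lambda$-$\mod$ is a Krull--Schmidt category, the indecomposable projective modules are precisely the indecomposable direct summands occurring in $\Lambda=\Lambda e_1\oplus\Lambda e_2$, so it remains to prove that each $T(P_i)$ and each $S(Q_j)$ is indecomposable.

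For this I would pass to endomorphism rings. The functor $T$ is left adjoint to the projection $U\colon(X,Y,f)\mapsto X$: a morphism $T(X)\to(X',Y',f')$ is a pair $(\alpha,\beta)$ subject to $\beta\circ I_{M\otimes X}=f'\circ(\mathrm{id}_M\otimes\alpha)$, which forces $\beta=f'\circ(\mathrm{id}_M\otimes\alpha)$, so the morphism is determined by $\alpha\in\Hom_A(X,X')$ and every such $\alpha$ is admissible. Hence $\Hom_\Lambda(T(X),N)\cong\Hom_A(X,UN)$, and in particular $\End_\Lambda(T(P_i))\cong\End_A(P_i)$, which is local because $P_i$ is indecomposable; therefore $T(P_i)$ is indecomposable. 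Likewise $S$ is the fully faithful embedding of $B$-$\mod$ as the triples with zero $A$-component, so $\End_\Lambda(S(Q_j))\cong\End_B(Q_j)$ is local and $S(Q_j)$ is indecomposable. As the first family has nonzero $A$-component and the second has zero $A$-component, the two lists are disjoint, giving exactly the two asserted forms.

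I expect the main obstacle to be the bookkeeping in the first two steps: matching the intrinsic left $\Lambda$-module structure on the columns $\Lambda e_1$ and $\Lambda e_2$ with the triple data $(X,Y,f)$, and in particular verifying that the structure map of $\Lambda e_1$ is genuinely the canonical isomorphism $M\otimes_A A\cong M$ rather than some twisted map. Once the adjunction $T\dashv U$ is set up the indecomposability is immediate, so the real content sits in the identification of the column modules. A slightly more elementary alternative, which I would keep in reserve, is to lift primitive idempotents $p$ of $A$ and $q$ of $B$ along the diagonal and compute $\Lambda\left(\begin{smallmatrix}p&0\\0&0\end{smallmatrix}\right)$ and $\Lambda\left(\begin{smallmatrix}0&0\\0&q\end{smallmatrix}\right)$ directly, recovering the same two families while sidestepping the functorial language.
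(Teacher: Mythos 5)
Your proof is correct, and in fact the paper does not prove this statement at all --- it simply quotes it from the cited reference \cite{AuRS} (III Proposition 2.5), where the argument is essentially the one you give: decompose $\Lambda=\Lambda e_1\oplus\Lambda e_2$ via the diagonal idempotents, identify the two column modules with the triples $(A,M\otimes A,I_{M\otimes A})$ and $(0,B,0)$, and deduce indecomposability of the summands $(P,M\otimes P,I_{M\otimes P})$ and $(0,Q,0)$ from the isomorphisms $\End_\Lambda(P,M\otimes P,I_{M\otimes P})\cong\End_A(P)$ and $\End_\Lambda(0,Q,0)\cong\End_B(Q)$, which you obtain cleanly from the adjunction. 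Your write-up is complete as it stands; the only point worth making explicit is that the adjunction bijection $\End_\Lambda(T(P))\to\End_A(P)$, $(\alpha,\beta)\mapsto\alpha$, is a ring homomorphism (clear, since composition of triple morphisms is componentwise), so locality really does transport.
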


\section{CM-$\tau$-tilting free algebras}

In this section, we first introduce CM-$\tau$-tilting free algebras which are the generalization of CM-free algebras and local algebras. Then we study the homological properties of these algebras and give a construction of CM-$\tau$-tilting free algebras by using $T_2$-extension.

We start with the definition of Gorenstein projective $\tau$-rigid modules.

\begin{definition}\label{4.b}  Let $\Lambda$ be an algebra and $M\in\Lambda$-$\mod$. $M$ is called {\it Gorenstein projective $\tau$-rigid} if it is both Gorenstein projective and $\tau$-rigid.
\end{definition}

Recall from \cite{XZ} that an algebra $\Lambda$ is called CM-$\tau$-tilting finite if there are only finite indecomposable Gorenstein projective $\tau$-rigid modules in $\Lambda$-$\mod$ up to isomorphism. In the following we introduce a special class of CM-$\tau$-tilting finite  algebras called CM-$\tau$-free algebras.

\begin{definition}\label{4.1} Let $\Lambda$ be an algebra. We call $\Lambda$ CM-$\tau$-tilting free if all Gorenstein projective $\tau$-rigid modules are projective.
\end{definition}

It is obvious that CM-free algebras are CM-$\tau$-tilting free. While the converse is not true. In the following we give an example due to Ringel and Zhang [RZ1, Example 6.1] to show this.

\begin{example}\label{4.2} Let $k$ be a infinite field and $q \in k$ and $q\neq0$. Let $\Lambda$ be a $6$-dimensional local algebra $\Lambda(q)$.
\begin{enumerate}[\rm(1)]
\item There are infinite many Gorenstein projective modules of dimension $3$ and hence $\Lambda$ is not CM-free.
\item All $\tau$-rigid modules in $\mod \Lambda$ are projective since $\Lambda$ is local. Therefore, $\Lambda$ is CM-$\tau$-tilting free.
\end{enumerate}
\end{example}

In the following we show the symmetric properties of CM-$\tau$-tilting free algebras.

\begin{proposition}\label{4.3} Let $\Lambda$ be an algebra. $\Lambda$ is CM-$\tau$-tilting free if and only if $\Lambda^{o}$ is CM-$\tau$-tilting free.
\end{proposition}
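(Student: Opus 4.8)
The plan is to transport Gorenstein projective $\tau$-rigid modules across the transpose duality $\Tr\colon \underline{\mathcal{GP}(\Lambda)} \to \underline{\mathcal{GP}(\Lambda^{\mathrm{op}})}$ of Proposition \ref{2.2}(3). Since passing from $\Lambda$ to $\Lambda^{\mathrm{op}}$ is symmetric, it suffices to prove one implication, say that $\Lambda$ being CM-$\tau$-tilting free forces $\Lambda^{\mathrm{op}}$ to be CM-$\tau$-tilting free. Because a module is projective exactly when its projective-summand-free part vanishes, and a direct summand of a Gorenstein projective $\tau$-rigid module is again Gorenstein projective (as $\mathcal{GP}(\Lambda)$ is closed under summands) and again $\tau$-rigid (a summand of $\Hom_\Lambda(M,\tau M)=0$ is zero), I may restrict attention throughout to modules having no projective direct summands; these are precisely the objects of the stable categories on which $\Tr$ acts.

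The heart of the argument is the claim that, for a module $M$ without projective summands, $M$ is $\tau$-rigid over $\Lambda$ if and only if $\Tr M$ is $\tau$-rigid over $\Lambda^{\mathrm{op}}$. I would prove this via the Auslander--Reiten identity $\Hom_\Lambda(M,\tau M) \cong \mathbb{D}(\Tr M \otimes_\Lambda M)$, obtained from $\tau M = \mathbb{D}\Tr M$ together with Hom-tensor adjunction; since $\mathbb{D}$ is faithful, $M$ is $\tau$-rigid precisely when $\Tr M \otimes_\Lambda M = 0$. The group $\Tr M \otimes_\Lambda M$ is symmetric under swapping its two factors: identifying right $\Lambda$-modules with left $\Lambda^{\mathrm{op}}$-modules gives $\Tr M \otimes_\Lambda M \cong M \otimes_{\Lambda^{\mathrm{op}}} \Tr M$, and by the Auslander--Bridger involutivity $\Tr_{\Lambda^{\mathrm{op}}}\Tr M \cong M$ for modules without projective summands, this right-hand group equals $\Tr_{\Lambda^{\mathrm{op}}}(\Tr M)\otimes_{\Lambda^{\mathrm{op}}}\Tr M$. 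Applying the same $\tau$-rigidity criterion over $\Lambda^{\mathrm{op}}$ then yields the stated equivalence.

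Granting the claim, I would assemble the proof as follows. By Proposition \ref{2.2}(3) the functor $\Tr$ sends Gorenstein projective $\Lambda^{\mathrm{op}}$-modules without projective summands to Gorenstein projective $\Lambda$-modules without projective summands, and the claim shows it preserves $\tau$-rigidity. Hence if $N$ is any Gorenstein projective $\tau$-rigid $\Lambda^{\mathrm{op}}$-module with no projective summand, then $\Tr N$ is a Gorenstein projective $\tau$-rigid $\Lambda$-module with no projective summand; CM-$\tau$-tilting freeness of $\Lambda$ forces $\Tr N$ to be projective, hence $\Tr N = 0$, and therefore $N \cong \Tr\Tr N = 0$. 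Thus every Gorenstein projective $\tau$-rigid $\Lambda^{\mathrm{op}}$-module is projective, i.e.\ $\Lambda^{\mathrm{op}}$ is CM-$\tau$-tilting free; the reverse implication is identical with the roles of $\Lambda$ and $\Lambda^{\mathrm{op}}$ interchanged.

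The main obstacle is the key claim, and in particular the careful bookkeeping of projective direct summands: the tensor-vanishing condition $\Tr M \otimes_\Lambda M = 0$ is not a priori invariant under adjoining projective summands, so the reduction to summand-free modules and the exact (not merely stable) involutivity $\Tr\Tr M \cong M$ must be invoked with precision. One must also take care that the Hom-tensor adjunction and the identity $\tau M = \mathbb{D}\Tr M$ are applied with the correct left/right module structures. Once these identifications are pinned down, the remaining steps are formal.
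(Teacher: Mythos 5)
Your proposal is correct and takes essentially the same route as the paper's proof: both transport modules across the transpose duality of Proposition \ref{2.2}(3) and rest on the key fact that, for modules without projective direct summands, $M$ is $\tau$-rigid over $\Lambda$ if and only if $\Tr M$ is $\tau$-rigid over $\Lambda^{\mathrm{op}}$. The only difference is that the paper simply cites \cite[Theorem 2.14]{AIR} for this fact, whereas you reprove it from scratch via $\Hom_{\Lambda}(M,\tau M)\cong \mathbb{D}(\Tr M\otimes_{\Lambda}M)$, the swap isomorphism of the tensor product, and Auslander--Bridger involutivity $\Tr\Tr M\cong M$ --- a valid, self-contained substitute for the citation.
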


\begin{proof}  By \cite[Theorem 2.14]{AIR}, one gets that  for any $\tau$-rigid module $M$ without projective direct summands, $M$ is $\tau$-rigid if and only if so is ${\rm Tr}M$.

We only show the necessity since the sufficiency is similar. Suppose that $\Lambda$ is CM-$\tau$-tilting free. On the contrary, suppose that $\Lambda^{o}$ is not CM-$\tau$-tilting free. Then there is a indecomposable Gorenstein projective $\tau$-rigid module $N \in\Lambda^{o}$-$\mod$, then one gets ${\rm Tr}N \in\Lambda$-$\mod$ is a Gorenstein projective $\tau$-rigid module which is not projective by Proposition \ref{2.2}, a contradiction.
\end{proof}

An algebra $\Lambda$ is Gorenstein if $\id _{\Lambda}\Lambda< \infty$ and $\id \Lambda_{\Lambda}< \infty$. A Gorenstein algebra $\Lambda$ is $n$-Gorenstein if $\id _{\Lambda}\Lambda\leq n$.
In the following, we classify Gorenstein CM-free algebras which are well-known.

\begin{proposition}\label{4.4} Let $\Lambda$ be an $n$-Gorenstein algebra. If $\Lambda$ is CM-free, then $\Lambda$ is of global dimension at most $n$.
\end{proposition}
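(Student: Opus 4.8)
The plan is to show that every module $M\in\Lambda$-$\mod$ has projective dimension at most $n$; since $\gldim\Lambda=\sup_{M}\pd M$, this immediately yields $\gldim\Lambda\leq n$. The bridge between the $n$-Gorenstein hypothesis and projective dimension is the $n$-th syzygy $\Omega^{n}M$: I will argue that $\Omega^{n}M$ is always Gorenstein projective, and then invoke CM-freeness to force it to be projective. The two hypotheses of the proposition thus feed into two different steps—the $n$-Gorenstein condition produces the Gorenstein projective syzygy, and CM-freeness upgrades it to a projective one.

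To see that $\Omega^{n}M$ is Gorenstein projective, I would use the standard characterization valid over any Gorenstein algebra: a module $N$ lies in $\mathcal{GP}(\Lambda)$ if and only if $\Ext^{i}_{\Lambda}(N,\Lambda)=0$ for all $i\geq1$. The forward implication is immediate from Definition \ref{2.1} (the defining complete resolution stays exact under $\Hom_{\Lambda}(-,\Lambda)$), while the converse is where the two-sided finiteness $\id{}_{\Lambda}\Lambda<\infty$ and $\id\Lambda_{\Lambda}<\infty$ is used, in order to splice an injective coresolution of $\Lambda$ onto a projective resolution of $N$ and assemble a totally acyclic complex. Fixing a minimal projective resolution of $M$ and applying dimension shifting, one has $\Ext^{i}_{\Lambda}(\Omega^{n}M,\Lambda)\cong\Ext^{i+n}_{\Lambda}(M,\Lambda)$ for every $i\geq1$. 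Because $\Lambda$ is $n$-Gorenstein we have $\id{}_{\Lambda}\Lambda\leq n$, so $\Ext^{j}_{\Lambda}(-,\Lambda)=0$ for all $j>n$; as $i+n\geq n+1$, the right-hand side vanishes. Hence $\Ext^{i}_{\Lambda}(\Omega^{n}M,\Lambda)=0$ for all $i\geq1$, and therefore $\Omega^{n}M\in\mathcal{GP}(\Lambda)$.

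Now the CM-free hypothesis enters: since every Gorenstein projective module is projective, $\Omega^{n}M$ is projective. Extracting from the chosen resolution the exact sequence $0\to\Omega^{n}M\to P_{n-1}\to\cdots\to P_{0}\to M\to0$, a projective $n$-th syzygy exhibits a projective resolution of length $n$, whence $\pd M\leq n$. As $M$ was arbitrary, $\gldim\Lambda\leq n$, completing the argument.

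I expect the only genuine obstacle to be the claim that $\Omega^{n}M$ is Gorenstein projective, i.e. the $\Ext$-vanishing characterization of $\mathcal{GP}(\Lambda)$ over a Gorenstein algebra; everything else—the dimension shift and the collapse of Gorenstein projective dimension to ordinary projective dimension under CM-freeness—is routine. If one prefers to avoid invoking that characterization, an equivalent route is to appeal directly to the well-known fact that over an $n$-Gorenstein algebra every module has Gorenstein projective dimension at most $n$, so that $\Omega^{n}M$ is already Gorenstein projective by construction; the remainder of the proof is then unchanged.
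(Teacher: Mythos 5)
Your proposal is correct and follows essentially the same route as the paper: the paper also shows that over an $n$-Gorenstein algebra $\Omega^n M$ is Gorenstein projective (citing Auslander--Reiten's theorem directly, which is the ``equivalent route'' you mention at the end), then uses CM-freeness to conclude $\Omega^n M$ is projective and hence $\pd M\leq n$. Your extra detail---deriving the Gorenstein projectivity of $\Omega^n M$ from the $\Ext$-vanishing characterization via dimension shifting---is a correct unpacking of the cited result rather than a different argument.
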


\begin{proof} Since $\Lambda$ is $n$-Gorenstein, then for any $M\in\Lambda$-$\mod$, $\Omega^n M$ is Gorenstein projective by \cite[Theorem 4.1]{AuR}. Since $\Lambda$ is CM-free, then $\Omega^n M$ is projective, and hence $\pd M\leq n$. The assertion holds.
\end{proof}

It is natural to ask the following question.

\begin{question}\label{4.a} Is a Gorenstein CM-$\tau$-tilting free algebra local or of finite global dimension?
\end{question}

We give a negative answer to the Question \ref{4.a} by giving the following counter-example.

\begin{example}\label{4.5} Let $\Lambda$ be an algebra given by the quiver $$\xymatrix@C=1cm{1\ar@(ul,dl)_{\alpha }\ar[r]&2}$$ with the relation $\alpha^2=0$.
\begin{enumerate}[\rm(1)]
\item $\Lambda$ is a $1$-Gorenstein algebra with infinite global dimension which is not local.
\item  The support $\tau$-tilting quiver of $\Lambda$ is as follows:
$$\begin{xy}
(0,0)*+{\begin{smallmatrix} &1\\1&&2\\2&\end{smallmatrix}\oplus\begin{smallmatrix}2\end{smallmatrix}}="1",
(0,-15)*+{\begin{smallmatrix} 2\end{smallmatrix}}="2",
(22,0)*+{\begin{smallmatrix} &1\\1&&2\\2&\end{smallmatrix}\oplus\begin{smallmatrix}1\\ 1\\2\end{smallmatrix}}="3",
(44,0)*+{\begin{smallmatrix} 1\\1\end{smallmatrix}\oplus\begin{smallmatrix}1\\ 1\\2\end{smallmatrix}}="4",
(44,-15)*+{\begin{smallmatrix} 1\\1\end{smallmatrix}}="5",
(22,-15)*+{\begin{smallmatrix} 0\end{smallmatrix}}="6",
\ar"1";"2",\ar"1";"3",\ar"3";"4",\ar"2";"6",
\ar"4";"5",\ar"5";"6",
\end{xy}$$
And hence $\Lambda$ is a CM-$\tau$-tilting free algebra.
\end{enumerate}
\end{example}

In the following we try to construct more examples on CM-$\tau$-tilting free algebras. We need the following results in \cite[Theorem 1.1]{LZ1} and \cite[Corollary 3.4]{LZh}

\begin{proposition}\label{4.6} Let $\Lambda$ be a Gorenstein algebra.
\begin{enumerate}[\rm(1)]

 \item A $T_2(\Lambda)$-module $(X,Y,\phi)$ with $\phi:X\rightarrow Y$ is Gorenstein-projective if and only if $X,
Y$ and ${\rm Coker}\phi$ are Gorenstein-projective modules in $\Lambda$-$\mod$ and $\phi : X\rightarrow Y$ is injective.

\item If a $T_2(\Lambda)$-module $(X,Y,\phi)$ with $\phi:X\rightarrow Y$ is $\tau$-rigid, then $X$ is $\tau$-rigid and ${\rm Coker \phi}$  is $\tau$-rigid.

\end{enumerate}
\end{proposition}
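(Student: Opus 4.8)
The plan is to reduce both statements to computations with the projective $T_2(\Lambda)$-modules recalled above: every projective is a sum of copies of $(P,P,1)$ and $(0,Q,0)$ with $P,Q\in\mathcal P(\Lambda)$, and $T_2(\Lambda)=(\Lambda,\Lambda,1)\oplus(0,\Lambda,0)$ as a left module over itself. The computational heart is the natural isomorphism $\Hom_{T_2(\Lambda)}((X,Y,\phi),T_2(\Lambda))\cong Y^{\ast}\oplus(\Coker\phi)^{\ast}$, obtained by observing that a morphism $(X,Y,\phi)\to(\Lambda,\Lambda,1)$ is the same datum as a morphism $Y\to\Lambda$, while a morphism $(X,Y,\phi)\to(0,\Lambda,0)$ is the same datum as a morphism $\Coker\phi\to\Lambda$. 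Being natural, these identifications apply to complexes and identify $\Hom_{T_2(\Lambda)}(-,T_2(\Lambda))$ on a complex of projectives with $(-)^{\ast}$ applied to its second-coordinate complex and to its cokernel complex.

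For statement (1), suppose first that $(X,Y,\phi)$ is Gorenstein projective, with complete projective resolution $(P_{\bullet},P_{\bullet}\oplus Q_{\bullet},\iota_{\bullet})$ whose second-coordinate differential is upper triangular with respect to the split inclusions $\iota_{\bullet}$. The exact first-coordinate functor shows $P_{\bullet}$ is exact with image $X$; the cokernel functor shows $Q_{\bullet}$ is exact with image $\Coker\phi$; and $\phi$ is a restriction of the split monomorphism $\iota$, hence injective. The Hom-formula identifies $\Hom_{T_2(\Lambda)}((P_{\bullet},P_{\bullet}\oplus Q_{\bullet},\iota_{\bullet}),T_2(\Lambda))$ with $(P_{\bullet}\oplus Q_{\bullet})^{\ast}\oplus(Q_{\bullet})^{\ast}$, so total acyclicity forces both $(P_{\bullet}\oplus Q_{\bullet})^{\ast}$ and $(Q_{\bullet})^{\ast}$ to be exact; dualizing the termwise-split sequence $0\to P_{\bullet}\to P_{\bullet}\oplus Q_{\bullet}\to Q_{\bullet}\to 0$ then makes $(P_{\bullet})^{\ast}$ exact as well, so $X$, $Y$ and $\Coker\phi$ are Gorenstein projective. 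Conversely, starting from the short exact sequence $0\to X\xrightarrow{\phi}Y\to\Coker\phi\to 0$ with $X$ and $\Coker\phi$ Gorenstein projective, I would splice complete resolutions of $X$ and $\Coker\phi$ via the horseshoe lemma into a complete resolution of $Y$ and assemble a complex $(P_{\bullet},P_{\bullet}\oplus Q_{\bullet},\iota_{\bullet})$ of $T_2(\Lambda)$-projectives with image $(X,Y,\phi)$; the same Hom-formula shows it is totally acyclic. Since $\Lambda$ is Gorenstein, $T_2(\Lambda)$ is Gorenstein too, so one may, if preferred, replace this bookkeeping by the criterion $M\in\mathcal{GP}\iff\Ext_{\Lambda}^{i}(M,\Lambda)=0$ for all $i\geq 1$.

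For statement (2), let $\mathbf P_1\xrightarrow{F}\mathbf P_0\to(X,Y,\phi)\to 0$ be a minimal projective presentation, with $\mathbf P_i=(P_i,P_i\oplus Q_i,\iota_i)$, so that by Proposition \ref{2.5} the $\tau$-rigidity of $(X,Y,\phi)$ means $\Hom_{T_2(\Lambda)}(F,(X,Y,\phi))$ is surjective. Using $\Hom_{T_2(\Lambda)}((P,P,1),(X,Y,\phi))\cong\Hom_{\Lambda}(P,X)$ and $\Hom_{T_2(\Lambda)}((0,Q,0),(X,Y,\phi))\cong\Hom_{\Lambda}(Q,Y)$, this surjection becomes a block lower-triangular map $\Hom_{\Lambda}(P_0,X)\oplus\Hom_{\Lambda}(Q_0,Y)\to\Hom_{\Lambda}(P_1,X)\oplus\Hom_{\Lambda}(Q_1,Y)$ whose diagonal blocks are $\Hom_{\Lambda}(F_X,X)$ and $\Hom_{\Lambda}(F_Q,Y)$, where $F_X$ and $F_Q$ are the first-coordinate and cokernel components of $F$. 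Projecting onto the first coordinate shows $\Hom_{\Lambda}(F_X,X)$ is surjective, so $X$ is $\tau$-rigid by Proposition \ref{2.5}. For $\Coker\phi$ I would post-compose with the canonical epimorphism $(X,Y,\phi)\to(0,\Coker\phi,0)$; since the cokernel functor is left adjoint to $Z\mapsto(0,Z,0)$, this realises a map $\Hom_{T_2(\Lambda)}(\mathbf P_i,(X,Y,\phi))\to\Hom_{\Lambda}(Q_i,\Coker\phi)$ which is surjective because $Q_i$ is projective and $Y\to\Coker\phi$ is epic. A short diagram chase in the resulting commutative square, together with the surjectivity of $\Hom_{T_2(\Lambda)}(F,(X,Y,\phi))$, forces $\Hom_{\Lambda}(F_Q,\Coker\phi)$ to be surjective, whence $\Coker\phi$ is $\tau$-rigid.

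The step I expect to be the main obstacle is verifying the minimality needed to invoke Proposition \ref{2.5}: one must show that the first-coordinate and cokernel functors send the minimal projective presentation of $(X,Y,\phi)$ to \emph{minimal} projective presentations of $X$ and $\Coker\phi$. This rests on the identities $\operatorname{top}(X,Y,\phi)\cong\operatorname{top}X\oplus\operatorname{top}\Coker\phi$ and the description of the projective cover of $(X,Y,\phi)$ as $(P(X),P(X)\oplus P(\Coker\phi),\iota)$, which then have to be propagated to the syzygy. In statement (1) the analogous delicate point is confirming that the horseshoe-assembled complex is genuinely totally acyclic and not merely exact, which is precisely what the Hom-formula is designed to control.
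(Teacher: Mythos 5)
A point of reference first: the paper itself contains no proof of Proposition \ref{4.6}; part (1) is quoted from \cite[Theorem 1.1]{LZ1} and part (2) from \cite[Corollary 3.4]{LZh}, so your proposal is a genuine reconstruction rather than a variant of an in-text argument. Its skeleton is sound: the description of the projectives, the natural isomorphisms $\Hom_{T_2(\Lambda)}((X,Y,\phi),(\Lambda,\Lambda,1))\cong\Hom_{\Lambda}(Y,\Lambda)$ and $\Hom_{T_2(\Lambda)}((X,Y,\phi),(0,\Lambda,0))\cong\Hom_{\Lambda}(\Coker\phi,\Lambda)$, the upper-triangular form of maps between projectives, and the block analysis in (2) are all correct. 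Two glossed steps in part (1) deserve care: the cokernel functor is only right exact, so exactness of $Q_{\bullet}$ must come from the long exact homology sequence of $0\to P_{\bullet}\to (P\oplus Q)_{\bullet}\to Q_{\bullet}\to 0$, and identifying $\Im(Q_{-1}\to Q_{0})$ with $\Coker\phi$ needs a small snake-lemma argument; in the converse, splicing the right halves of the complete resolutions is a \emph{dual} horseshoe lemma, which uses $\Ext^{1}_{\Lambda}(G,P)=0$ for $G$ Gorenstein projective and $P$ projective. Your fallback through the criterion $\Ext^{i}(M,\Lambda)=0$ for $i\geq 1$ is legitimate precisely because $\Lambda$, hence $T_{2}(\Lambda)\cong T_{2}(K)\otimes\Lambda$, is Gorenstein; this is where the Gorenstein hypothesis earns its keep.

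The genuine gap sits exactly where you flagged it, in part (2), and the repair you propose would fail. The cokernel functor does \emph{not} send the minimal projective presentation of $(X,Y,\phi)$ to a minimal presentation of $\Coker\phi$: writing $\Omega(X,Y,\phi)=(\Omega X,K,\psi)$, the snake lemma gives an exact sequence $0\to\Ker\phi\to\Coker\psi\to Q_{0}\to\Coker\phi\to 0$, where $Q_{0}$ is the projective cover of $\Coker\phi$; hence $Q_{1}$ is the projective cover of $\Coker\psi$, which is strictly larger than that of $\Omega(\Coker\phi)$ whenever $\Ker\phi\neq 0$ --- and in (2) you may not assume $\phi$ injective, since $(X,Y,\phi)$ is only $\tau$-rigid, not Gorenstein projective. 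Concretely, over $\Lambda=K$ the $\tau$-rigid $T_{2}(K)$-module $(K,0,0)$ has minimal presentation $(0,K,0)\to(K,K,1)\to(K,0,0)\to 0$, whose cokernel component $K\to 0$ is not minimal. Fortunately minimality is not needed: if $\Hom_{\Lambda}(f,M)$ is surjective for \emph{some} projective presentation $f$ of $M$, then $M$ is $\tau$-rigid, because every presentation decomposes as the minimal one plus trivial summands $P\xrightarrow{1}P$ and $P'\to 0$, and surjectivity passes to direct summands of a map; this one-directional strengthening of Proposition \ref{2.5} is contained in \cite[Proposition 2.4]{AIR}. With it, your projection argument for $X$ (where minimality does propagate: the first coordinates of the projective cover and of the syzygy of $(X,Y,\phi)$ are the projective cover of $X$ and $\Omega X$) and your commutative-square chase for $\Coker\phi$ (correct as written, since the vertical maps are surjective because the $Q_{i}$ are projective and $Y\to\Coker\phi$ is epic) complete the proof.
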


We also need the following property on the tensor product of $\tau$-rigid modules \cite[Proposition 3.3]{LZh2}.

 \begin{proposition}\label{2.6} Let $A$ and $B$ be two algebras. If $P\in A$-$\mod$ is a projective module and $M\in B$-$\mod$ is a $\tau$-rigid module, then $P\otimes M\in (A\otimes B)$-$\mod$ is a $\tau$-rigid module.
\end{proposition}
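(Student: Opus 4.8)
The plan is to verify the $\tau$-rigidity of $P\otimes M$ through the epic criterion of Proposition \ref{2.5}, by transporting a minimal projective presentation of $M$ across the exact functor $P\otimes_K-$. Since $A\otimes B=A\otimes_K B$ and the base field $K$ makes $-\otimes_K-$ exact, tensoring a minimal projective presentation $Q_1\xrightarrow{g}Q_0\to M\to 0$ of $M$ with the $A$-projective module $P$ yields an exact sequence $P\otimes Q_1\xrightarrow{1\otimes g}P\otimes Q_0\to P\otimes M\to 0$ in which each $P\otimes Q_i$ is projective over $A\otimes B$, being a direct summand of a free $(A\otimes B)$-module. Thus I obtain a projective presentation of $P\otimes M$, and the whole argument reduces to showing (i) that this presentation is minimal and (ii) that $\Hom_{A\otimes B}(1\otimes g,\,P\otimes M)$ is surjective.

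For (i) I would use that over an algebraically closed field $\rad(A\otimes B)=\rad A\otimes B+A\otimes\rad B$, so that $(A\otimes B)/\rad(A\otimes B)\cong(A/\rad A)\otimes(B/\rad B)$ and consequently $\operatorname{top}(X\otimes Y)\cong\operatorname{top}X\otimes\operatorname{top}Y$ for all $X\in A\text{-}\mod$ and $Y\in B\text{-}\mod$. Applying $\operatorname{top}$ to the cover $1\otimes(Q_0\to M)$ and to $1\otimes(Q_1\to\Omega M)$, where $\Omega M=\ker(Q_0\to M)$, shows that each becomes an isomorphism after $\operatorname{top}(P\otimes-)=\operatorname{top}P\otimes\operatorname{top}(-)$, so both remain projective covers; hence $1\otimes g$ is a minimal projective presentation of $P\otimes M$.

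For (ii) the key input is the natural isomorphism $\Hom_{A\otimes B}(P\otimes Q_i,\,P\otimes M)\cong\Hom_A(P,P)\otimes_K\Hom_B(Q_i,M)$, valid because both $P$ and $Q_i$ are finitely generated projective; one checks it on free modules by evaluation and extends by additivity to direct summands. This isomorphism is natural in the first variable, so it carries $\Hom_{A\otimes B}(1\otimes g,\,P\otimes M)$ to $\mathrm{id}_{\Hom_A(P,P)}\otimes\Hom_B(g,M)$. Since $M$ is $\tau$-rigid, Proposition \ref{2.5} gives that $\Hom_B(g,M)$ is epic, and tensoring an epimorphism over the field $K$ keeps it epic; hence $\Hom_{A\otimes B}(1\otimes g,\,P\otimes M)$ is surjective, and Proposition \ref{2.5} applied over $A\otimes B$ yields the claim.

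The main obstacle I anticipate is the minimality check in step (i): it is exactly where the hypothesis that $K$ is algebraically closed enters, through the radical formula for $A\otimes_K B$, and without it the tensored presentation could fail to be minimal, so that Proposition \ref{2.5} would not apply directly. A cleaner but essentially equivalent alternative would be to compute $\Tr(P\otimes M)\cong P^{\ast}\otimes\Tr M$ from the (now minimal) tensored presentation and deduce $\tau(P\otimes M)\cong\nu P\otimes\tau M$ with $\nu$ the Nakayama functor, after which the same $\Hom$-tensor identity gives $\Hom_{A\otimes B}(P\otimes M,\tau(P\otimes M))\cong\Hom_A(P,\nu P)\otimes\Hom_B(M,\tau M)=0$; either way, the radical computation underlying minimality is the crux of the proof.
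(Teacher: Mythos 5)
Your proof is correct, but there is nothing in the paper to compare it against: the paper does not prove Proposition \ref{2.6} at all, it imports the statement from \cite[Proposition 3.3]{LZh2} and uses it as a black box in the proof of the $T_2$-extension theorem. Judged as a free-standing argument, yours is complete. Both pillars hold: (i) since the base field is algebraically closed (the paper's standing assumption), $\rad(A\otimes B)=\rad A\otimes B+A\otimes\rad B$, hence $\operatorname{top}(X\otimes Y)\cong\operatorname{top}X\otimes\operatorname{top}Y$ naturally, which indeed forces $P\otimes Q_1\xrightarrow{1\otimes g}P\otimes Q_0\to P\otimes M\to 0$ to be a \emph{minimal} projective presentation; (ii) the natural isomorphism $\Hom_{A\otimes B}(P\otimes Q_i,P\otimes M)\cong\Hom_A(P,P)\otimes_K\Hom_B(Q_i,M)$ identifies $\Hom_{A\otimes B}(1\otimes g,P\otimes M)$ with $\mathrm{id}\otimes\Hom_B(g,M)$, which is surjective because $\Hom_B(g,M)$ is (Proposition \ref{2.5} for $M$ over $B$, where minimality of $g$ is genuinely needed) and $-\otimes_K-$ is exact; Proposition \ref{2.5} over $A\otimes B$ then gives the claim. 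Two minor remarks. First, your step (i) is only required because the paper states Proposition \ref{2.5} as an equivalence for minimal presentations; surjectivity of $\Hom(p,N)$ for an \emph{arbitrary} projective presentation already implies $\tau$-rigidity (split the presentation into its minimal part plus trivial summands), so the minimality check could be bypassed at the cost of invoking this slightly stronger form of the AIR criterion --- still, proving minimality keeps the argument self-contained relative to the paper's toolkit. Second, your alternative route via $\tau(P\otimes M)\cong\nu P\otimes\tau M$ and $\Hom_{A\otimes B}(P\otimes M,\tau(P\otimes M))\cong\Hom_A(P,\nu P)\otimes_K\Hom_B(M,\tau M)=0$ is equally valid, but note that it uses the Hom-tensor isomorphism with non-projective arguments; this is still true for finite-dimensional modules over finite-dimensional $K$-algebras (reduce to free modules via a finite presentation and left exactness of $\Hom$), so no harm is done, only a slightly more general lemma is consumed.
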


Now we are in a position to show the $T_2$-extension of a Gorenstein CM-$\tau$-tilting free algebra $\Lambda$. As a result, we can construct a class of Gorenstein CM-$\tau$-tilting free algebras.

\begin{theorem}\label{4.5} Let $\Lambda$ be a Gorenstein algebra. Then the lower triangular matrix algebra $T_2(\Lambda)$ is a CM-$\tau$-tilting free algebra if and only if  $\Lambda$ is a CM-$\tau$-tilting free algebra.
\end{theorem}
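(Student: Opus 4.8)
The plan is to prove both implications using the characterizations of Gorenstein projective and $\tau$-rigid $T_2(\Lambda)$-modules recorded in Proposition \ref{4.6}, together with the explicit description of the indecomposable projective $T_2(\Lambda)$-modules given before Section 3. Throughout I identify a $T_2(\Lambda)$-module with a triple $(X,Y,\phi)$ where $\phi\colon X\to Y$ is a $\Lambda$-homomorphism.

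For the necessity I would assume $\Lambda$ is CM-$\tau$-tilting free and take an arbitrary Gorenstein projective $\tau$-rigid $T_2(\Lambda)$-module $(X,Y,\phi)$. By Proposition \ref{4.6}(1) the modules $X$, $Y$ and $\Coker\phi$ are Gorenstein projective in $\Lambda$-$\mod$ and $\phi$ is injective, while by Proposition \ref{4.6}(2) both $X$ and $\Coker\phi$ are $\tau$-rigid. Hence $X$ and $\Coker\phi$ are Gorenstein projective $\tau$-rigid $\Lambda$-modules, so both are projective by hypothesis. Since $\Coker\phi$ is projective, the short exact sequence $0\to X\xrightarrow{\phi}Y\to\Coker\phi\to 0$ splits; choosing a splitting and using it to build an explicit pair of component maps, I would produce a $T_2(\Lambda)$-isomorphism $(X,Y,\phi)\cong(X,X,\mathrm{id})\oplus(0,\Coker\phi,0)$. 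From the list of indecomposable projectives, $(X,X,\mathrm{id})$ is a sum of modules $(P,P,I_{P})$ and hence projective because $X$ is, while $(0,\Coker\phi,0)$ is projective because $\Coker\phi$ is. Thus $(X,Y,\phi)$ is projective and $T_2(\Lambda)$ is CM-$\tau$-tilting free.

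For the sufficiency I would assume $T_2(\Lambda)$ is CM-$\tau$-tilting free and take a Gorenstein projective $\tau$-rigid $\Lambda$-module $M$. Writing $T_2(\Lambda)\cong T_2(K)\otimes_K\Lambda$ and letting $P=(K,K,\mathrm{id})$ be the indecomposable projective $T_2(K)$-module, the module $P\otimes_K M$ is exactly the $T_2(\Lambda)$-module $(M,M,\mathrm{id})$. Since $P$ is projective (hence Gorenstein projective) and $M$ is Gorenstein projective, Proposition \ref{2.3} shows $(M,M,\mathrm{id})$ is Gorenstein projective; since $P$ is projective and $M$ is $\tau$-rigid, Proposition \ref{2.6} shows $(M,M,\mathrm{id})$ is $\tau$-rigid. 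By hypothesis $(M,M,\mathrm{id})$ is therefore projective, and comparing the first components in any decomposition of a projective $T_2(\Lambda)$-module into summands $(P',P',I_{P'})$ and $(0,Q,0)$ forces $M$ itself to be projective. Hence $\Lambda$ is CM-$\tau$-tilting free.

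I expect the main technical point to be the necessity direction: one must check that the splitting of $0\to X\to Y\to\Coker\phi\to 0$ really induces the claimed direct-sum decomposition in the category of $T_2(\Lambda)$-modules, i.e.\ that the structure morphism $\phi$ is compatible with the chosen isomorphism of underlying $\Lambda$-modules, before reading off projectivity from the explicit list of indecomposable projectives. The sufficiency is lighter; there the only care required is the identification of $(M,M,\mathrm{id})$ with the tensor product $P\otimes_K M$ so that Propositions \ref{2.3} and \ref{2.6} apply directly.
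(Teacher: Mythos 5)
Your proof is correct, and half of it coincides with the paper's own argument: for the implication ``$\Lambda$ CM-$\tau$-tilting free $\Rightarrow$ $T_2(\Lambda)$ CM-$\tau$-tilting free'' the paper proceeds exactly as you do, applying both parts of Proposition \ref{4.6}, splitting the sequence $0\to X\to Y\to\Coker\phi\to 0$, and decomposing $(X,Y,\phi)\cong(X,X,I_X)\oplus(0,\Coker\phi,0)$. The difference lies in the converse direction. The paper also uses the identification $T_2(\Lambda)\simeq T_2(K)\otimes\Lambda$ together with Propositions \ref{2.3} and \ref{2.6}, but it tensors a hypothetical non-projective Gorenstein projective $\tau$-rigid module $M$ with the whole regular module $T_2(K)$, and then must invoke an external result, \cite[Lemma 2.7]{LZh2}, to know that $T_2(K)\otimes M$ is non-projective, concluding by contradiction. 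You instead tensor with the single indecomposable projective $(K,K,\mathrm{id})$, so the resulting $T_2(\Lambda)$-module is $(M,M,\mathrm{id})$, and you extract projectivity of $M$ directly: any decomposition of the projective module $(M,M,\mathrm{id})$ into the indecomposable projectives $(P,P,I_P)$ and $(0,Q,0)$ restricts, on first components, to an isomorphism of $M$ with a direct sum of projective $\Lambda$-modules. This buys you a self-contained and slightly more elementary argument---the citation of \cite[Lemma 2.7]{LZh2} is replaced by the observation that an isomorphism of $T_2(\Lambda)$-modules is in particular an isomorphism of first components---and it runs directly rather than by contradiction; in all other respects the two proofs are the same in substance.
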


\begin{proof} $\Leftarrow$ For a $T_2(\Lambda)$-module $(X,Y,\phi)$, assume that $(X,Y,\phi)$ is Gorenstein projective $\tau$-rigid. By Proposition \ref{4.6}(1), $X,
Y$ and ${\rm Coker}\phi$ are Gorenstein-projective modules in $\Lambda$-$\mod$ and $\phi : X\rightarrow Y$ is injective. Then by Proposition \ref{4.6}(2), $X$ is $\tau$-rigid and ${\rm Coker \phi}$  is $\tau$-rigid. Since $\Lambda$ is CM-$\tau$-tilting free, then the fact both $X$ and ${\rm Coker} \phi$ are Gorenstein projective $\tau$-rigid implies that both $X$ and ${\rm Coker} \phi$ are projective. Since $0\rightarrow X\rightarrow Y\rightarrow {\rm Coker}\phi\rightarrow 0$ is an exact sequence, one gets that the sequence splits, and hence $(X,Y,\phi)$ is a direct sum of $(X,X, I_X)$ and $(0, {\rm Coker}\phi,0)$. Then one gets the assertion.

$\Rightarrow$ On the contrary, suppose that $\Lambda$ is not CM-$\tau$-tilting free. Then we get a non-projective Gorenstein projective $\tau$-rigid module $M\in \Lambda$-$\mod$.  Since $T_2(\Lambda)\simeq T_2(K)\otimes\Lambda$, we get $T_2(K)\otimes M$ is Gorenstein projective and $\tau$-rigid by Proposition \ref{2.3} and Proposition \ref{2.6}. Since $M$ is non-projective, one gets that $T_2(K)\otimes M$ is non-projective by \cite[Lemma 2.7]{LZh2}. Note that $T_2(\Lambda)$ is CM-$\tau$-tilting free,  a contradiction.
\end{proof}

As a straight application, we get the following corollary.

\begin{corollary}\label{4.7} Let $\Lambda$ be an algebra. Then $\Lambda$ is Gorenstein CM-$\tau$-tilting free if and only if so is the lower triangular matrix algebra $T_2(\Lambda)$.
\end{corollary}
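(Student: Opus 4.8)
The plan is to deduce Corollary~\ref{4.7} directly from Theorem~\ref{4.5}; the only ingredient not already supplied by that theorem is the transfer of the Gorenstein property between $\Lambda$ and $T_2(\Lambda)$, so I would isolate this as a single auxiliary fact and then assemble the two implications by routine bookkeeping.

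First I would record that $T_2(\Lambda)$ is a Gorenstein algebra if and only if $\Lambda$ is a Gorenstein algebra. For the direction $\Lambda$ Gorenstein $\Rightarrow T_2(\Lambda)$ Gorenstein, I would use the isomorphism $T_2(\Lambda)\simeq T_2(K)\otimes\Lambda$ together with the facts that $T_2(K)$ is hereditary (hence Gorenstein) and that the tensor product over the algebraically closed field $K$ of two Gorenstein algebras is again Gorenstein; this bounds the injective dimension of $T_2(\Lambda)$ on both sides. For the converse, I would use the description of the projective and injective $T_2(\Lambda)$-modules recalled before the theorem, together with the standard relation between the one-sided injective dimensions over a triangular matrix algebra and those of the corner algebra $\Lambda$; equivalently, one may simply cite the well-known statement that $T_n(\Lambda)$ is Gorenstein exactly when $\Lambda$ is.

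With this auxiliary fact in hand the corollary reduces to a two-line case analysis on top of Theorem~\ref{4.5}. For necessity, if $\Lambda$ is Gorenstein and CM-$\tau$-tilting free, then $T_2(\Lambda)$ is Gorenstein by the transfer fact, and since the hypothesis of Theorem~\ref{4.5} is now met, $T_2(\Lambda)$ is also CM-$\tau$-tilting free; hence $T_2(\Lambda)$ is Gorenstein CM-$\tau$-tilting free. For sufficiency, if $T_2(\Lambda)$ is Gorenstein CM-$\tau$-tilting free, then $\Lambda$ is Gorenstein by the transfer fact, so Theorem~\ref{4.5} applies and yields that $\Lambda$ is CM-$\tau$-tilting free; that is, $\Lambda$ is Gorenstein CM-$\tau$-tilting free.

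The hard part will be the converse half of the Gorensteinness transfer, namely deducing that $\Lambda$ is Gorenstein from the assumption that $T_2(\Lambda)$ is Gorenstein. This is the only step that needs a genuine homological computation (or an explicit citation); once it is settled, the remainder is a formal application of Theorem~\ref{4.5}.
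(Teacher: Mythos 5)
Your proposal is correct and follows essentially the same route as the paper: reduce the corollary to Theorem~\ref{4.5} plus the transfer of Gorensteinness between $\Lambda$ and $T_2(\Lambda)$, using the decomposition $T_2(\Lambda)\simeq T_2(K)\otimes\Lambda$. The step you flag as the ``hard part'' (deducing that $\Lambda$ is Gorenstein from $T_2(\Lambda)$ being Gorenstein) is settled in the paper in one stroke by the identity $\id_{T_2(\Lambda)} T_2(\Lambda)=\id_{\Lambda}\Lambda+1$, which yields both directions of the transfer simultaneously and spares the separate tensor-product-of-Gorenstein-algebras argument.
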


\begin{proof} By Theorem \ref{4.5} it suffices to show that $\Lambda$ is Gorenstein if and only if so ia $T_2(\Lambda)$. This follows from the facts $T_2(\Lambda)\simeq T_2(K)\otimes \Lambda$ and $\rm {id}_{\Lambda} T_2(\Lambda)={\rm id}_{\Lambda}\Lambda+1$.
\end{proof}

\section{$E$-rigid modules}

In this section, we introduce the notion of $E$-Gorenstein projective and $E$-rigid modules, and show the bijection between Gorenstein projective $\tau$-rigid modules and $E$-Gorenstein projective $E$-rigid modules. Let $\Lambda$ be an algebra and $E$ a generator in $\Lambda$-$\mod$. In the following we recall the definition of $E$-Gorenstein projective modules from \cite{KZ}.

\begin{definition}\label{5.1} Let $\dots\rightarrow E_{-1}\stackrel{d_{-1}}{\rightarrow} E_0\stackrel{d_0}{\rightarrow}E_1\rightarrow\dots\ \ \ \ \ (*)$ be an exact sequence with $E_i\in\add E$ which are still exact by applying the functors ${\rm Hom}_{\Lambda}(-,E)$ and ${\rm Hom}_{\Lambda}(E,-)$. Then we call $M\simeq {\rm Im d_0}$ an $E$-Gorenstein projective module. Denote by $\mathcal{G}(E)$ the subcategory of $\Lambda$-$\mod$ consisting of all $E$-Gorenstein projective modules.
\end{definition}

For an algebra $\Gamma$, denote by $\mathcal{G}(\Gamma)$ the category of finitely generated Gorenstein projective left $\Lambda$-modules. The following property of $E$-Gorenstein projective modules is essential \cite[Theorem 4.3]{KZ}.

\begin{proposition}\label{5.2}
Let $\Lambda$ be an algebra and $E$ a generator in $\Lambda$-$\mod$, and $\Gamma=({\rm End}_\Lambda E)^{op}$. Then ${\rm Hom}_{\Lambda}(E, -)$ induces an equivalence between $\mathcal{G}(E)$and $\mathcal{G}(\Gamma)$.
\end{proposition}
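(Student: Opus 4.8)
The plan is to show that the functor $\Hom_{\Lambda}(E,-)$ restricts to an equivalence $\mathcal{G}(E)\to\mathcal{G}(\Gamma)$, building everything on the classical projectivization equivalence $\Hom_{\Lambda}(E,-)\colon\add E\to\mathcal{P}(\Gamma)$, which is dense, full and faithful and sends $E$ to $\Hom_{\Lambda}(E,E)\cong\Gamma$. The starting point is the natural isomorphism
\[
\Hom_{\Lambda}(X,Y)\cong\Hom_{\Gamma}\bigl(\Hom_{\Lambda}(E,X),\Hom_{\Lambda}(E,Y)\bigr),
\]
valid for all $X\in\add E$ and all $Y\in\Lambda$-$\mod$: for $X=E$ it reduces to the tautology $\Hom_{\Gamma}(\Gamma,\Hom_{\Lambda}(E,Y))=\Hom_{\Lambda}(E,Y)$, and both sides are additive in $X$, so it extends to all $X\in\add E$. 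Setting $Y=E$ gives the crucial identity $\Hom_{\Gamma}(\Hom_{\Lambda}(E,X),\Gamma)\cong\Hom_{\Lambda}(X,E)$, which is exactly what converts $\Hom_{\Lambda}(-,E)$-exactness in Definition \ref{5.1} into $\Hom_{\Gamma}(-,\Gamma)$-exactness.

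First I would check that the functor is well defined. Given $M\in\mathcal{G}(E)$ with an exact sequence $(\ast)$ as in Definition \ref{5.1} and $M\cong\Im d_0$, applying $\Hom_{\Lambda}(E,-)$ yields a complex $Q_{\bullet}=\Hom_{\Lambda}(E,E_{\bullet})$ of projective $\Gamma$-modules, exact because $(\ast)$ stays exact under $\Hom_{\Lambda}(E,-)$. Applying $\Hom_{\Gamma}(-,\Gamma)$ and using the identity above identifies $\Hom_{\Gamma}(Q_{\bullet},\Gamma)$ with $\Hom_{\Lambda}(E_{\bullet},E)$, which is exact since $(\ast)$ stays exact under $\Hom_{\Lambda}(-,E)$. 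Hence $Q_{\bullet}$ is totally acyclic and $\Hom_{\Lambda}(E,M)\cong\Im(Q_0\to Q_1)$ lies in $\mathcal{G}(\Gamma)$. For full faithfulness, take $M,N\in\mathcal{G}(E)$, truncate the resolution of $M$ to an $\add E$-presentation $E_{-1}\to E_0\to M\to0$ that stays exact under $\Hom_{\Lambda}(E,-)$, and compare the left-exact sequences obtained by applying $\Hom_{\Lambda}(-,N)$ to this presentation and $\Hom_{\Gamma}(-,\Hom_{\Lambda}(E,N))$ to its image. The natural isomorphism of the first paragraph (with $X=E_0,E_{-1}$ and $Y=N$) identifies the two rightmost terms compatibly with the differentials, so the kernels $\Hom_{\Lambda}(M,N)$ and $\Hom_{\Gamma}(\Hom_{\Lambda}(E,M),\Hom_{\Lambda}(E,N))$ correspond as well.

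The main work is essential surjectivity, and this is where I expect the only genuine obstacle. Given $G\in\mathcal{G}(\Gamma)$ with a totally acyclic complex of projectives $Q_{\bullet}$ and $G\cong\Im(Q_0\to Q_1)$, I would use density, fullness and faithfulness of $\Hom_{\Lambda}(E,-)\colon\add E\to\mathcal{P}(\Gamma)$ to lift each $Q_i$ to some $E_i\in\add E$ and each differential to a map $d_i\colon E_i\to E_{i+1}$ with $d_{i+1}d_i=0$ (by faithfulness), producing a complex $E_{\bullet}$ in $\add E$ with $\Hom_{\Lambda}(E,E_{\bullet})\cong Q_{\bullet}$. By construction $E_{\bullet}$ stays exact under $\Hom_{\Lambda}(E,-)$, and $\Hom_{\Lambda}(E_{\bullet},E)\cong\Hom_{\Gamma}(Q_{\bullet},\Gamma)$ is exact by total acyclicity, so $E_{\bullet}$ also stays exact under $\Hom_{\Lambda}(-,E)$. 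The delicate point is exactness of $E_{\bullet}$ \emph{in} $\Lambda$-$\mod$: exactness of $\Hom_{\Lambda}(E,E_{\bullet})$ does not formally force exactness of $E_{\bullet}$, since $\Hom_{\Lambda}(E,-)$ is only left exact. I would resolve this using that $E$ is a generator, so $\Lambda$ is a direct summand of some $E^{n}$; then $E_{\bullet}\cong\Hom_{\Lambda}(\Lambda,E_{\bullet})$ is a direct summand of $\Hom_{\Lambda}(E^{n},E_{\bullet})\cong Q_{\bullet}^{\,n}$, and a direct summand of an exact complex is exact, whence $E_{\bullet}$ is exact. Therefore $E_{\bullet}$ realizes Definition \ref{5.1}, so $M:=\Im(E_0\to E_1)\in\mathcal{G}(E)$ and $\Hom_{\Lambda}(E,M)\cong\Im(Q_0\to Q_1)=G$. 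Together with the previous paragraph this gives the equivalence.
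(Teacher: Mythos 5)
Your proof is correct, but note that the paper itself contains no proof of this statement: Proposition \ref{5.2} is imported verbatim from Kong--Zhang \cite[Theorem 4.3]{KZ}, so there is no in-paper argument to compare against. What you have written is a self-contained proof along the standard projectivization lines, and all three steps hold up. The well-definedness and full-faithfulness parts rest on the natural isomorphism $\Hom_{\Lambda}(X,Y)\cong\Hom_{\Gamma}(\Hom_{\Lambda}(E,X),\Hom_{\Lambda}(E,Y))$ for $X\in\add E$, which you justify correctly by additivity from the case $X=E$; the specialization $Y=E$ indeed turns $\Hom_{\Lambda}(-,E)$-exactness of $(\ast)$ into $\Hom_{\Gamma}(-,\Gamma)$-exactness of its image, which is exactly what makes the image complex totally acyclic. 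You were also right to flag the one genuinely non-formal point, and your resolution of it is sound: exactness of $\Hom_{\Lambda}(E,E_{\bullet})$ alone does not give exactness of $E_{\bullet}$, but since $E$ is a generator, $\Lambda$ is a direct summand of some $E^{n}$, so $E_{\bullet}\cong\Hom_{\Lambda}(\Lambda,E_{\bullet})$ is a direct summand (as a complex) of $\Hom_{\Lambda}(E^{n},E_{\bullet})\cong Q_{\bullet}^{\,n}$, and homology of a summand is a summand of homology. This is precisely the point where the hypothesis that $E$ is a generator (and not merely an additive subcategory) enters, and omitting it would have been a real gap. Two small points you state without proof but which do need the observation: $\Hom_{\Lambda}(E,\Im d_0)\cong\Im\Hom_{\Lambda}(E,d_0)$ follows from left exactness of $\Hom_{\Lambda}(E,-)$ together with exactness of the image complex (so that $\Ker\Hom_{\Lambda}(E,d_1)=\Im\Hom_{\Lambda}(E,d_0)$); and in the full-faithfulness step one should check the comparison isomorphism of kernels is the map induced by the functor itself, which follows from naturality of your isomorphism. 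Both are routine, so the argument stands as a complete proof of the cited result.
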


In the following, we introduce the notion of $E$-rigid modules which are the generalization of $\tau$-rigid modules.

\begin{definition}\label{5.4} Let $E$ be a generator in $\Lambda$-$\mod$ and $M\in\Lambda$-$\mod$. Let $E_1\stackrel{f_1}{\rightarrow}E_0\stackrel{f_0}{\rightarrow} M\rightarrow 0$ be an exact sequence with $f_i$ the minimal right $\add E$-approximation of $\Im f_i$ {(that is, a minimal $\add E$-presentation of $M$)}. If the map ${\rm Hom}_{\Lambda}(f_1,M) $ is surjective, then we call $M$ an $E$-rigid module. In particular, if $E=\Lambda$, then $E$-rigid modules are precisely $\tau$-rigid modules.
\end{definition}

We give the following example on $E$-rigid modules.

\begin{example}\label{5.5} Let $\Lambda=KQ$ be given by the quiver $Q:1\rightarrow 2\rightarrow 3$. Let $E_1=P(1)\oplus P(2)\oplus P(3)\oplus I(2)$ be a generator in $\Lambda$-$\mod$.
\begin{enumerate}[\rm(1)]
\item $E_1$ is $E_1$-rigid but not rigid and hence not $\tau$-rigid since we have the following non-split exact sequence: $$0\rightarrow P(3)\rightarrow P(1)\rightarrow I(2)\rightarrow0.$$
\item The following is a minimal $\add E_1$-presentation of $S(1):$ $$ P(2)\rightarrow I(2)\rightarrow S(1)\rightarrow 0.$$ Hence $S(1)$ is $E_1$-rigid but not in $\add E_1$.
\end{enumerate}
\end{example}

In the following we show the property on $E$-rigid modules which generalizes \cite [Proposition 2.5] {AIR}.
\begin{proposition}\label{5.6} Let $M, E\in$ $\Lambda$-$\mod$ and $E$ a generator. Let $E_1 \stackrel{f_1}{\rightarrow} E_0\stackrel{f_0}{\rightarrow}M\rightarrow 0$ be a minimal $\add E$-presentation of $M$. If $M$ is $E$-rigid, then $E_0$ and $E_1$ have no non-zero direct summands in common.
\end{proposition}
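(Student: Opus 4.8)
The plan is to transport the statement to the endomorphism algebra $\Gamma=(\operatorname{End}_{\Lambda}E)^{\op}$, where it becomes the classical Adachi--Iyama--Reiten result that I am generalizing, and then pull the conclusion back along the projectivization functor. Write $F:=\Hom_{\Lambda}(E,-)\colon \Lambda\text{-}\mod\to\Gamma\text{-}\mod$. The two facts about $F$ that drive everything are standard: first, $F$ restricts to an additive equivalence $\add E\xrightarrow{\ \sim\ }\add\Gamma=\mathcal{P}(\Gamma)$ with $FE=\Gamma$; second, for every $X\in\add E$ and every $N\in\Lambda\text{-}\mod$ the natural map $\Hom_{\Lambda}(X,N)\to\Hom_{\Gamma}(FX,FN)$ is an isomorphism (reduce by additivity to $X=E$, where it is the evaluation isomorphism $\Hom_{\Lambda}(E,N)\cong\Hom_{\Gamma}(\Gamma,FN)$). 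I will use the second fact precisely to convert the $E$-rigidity of $M$ into a surjectivity statement over $\Gamma$.

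First I would check that $F$ carries the given minimal $\add E$-presentation $E_1\xrightarrow{f_1}E_0\xrightarrow{f_0}M\to0$ to a minimal projective presentation $FE_1\xrightarrow{Ff_1}FE_0\xrightarrow{Ff_0}FM\to0$ of the $\Gamma$-module $FM$. Exactness is the one point that needs care, since $F$ is only left exact: applying $F$ gives exactness on the left automatically, while the approximation property of $f_0$ and of the induced map $E_1\to\Ker f_0$ supplies the surjectivity of $Ff_0$ and the exactness in the middle, because every map $E\to M$, respectively $E\to\Ker f_0$, factors through the approximation. Minimality transfers because right-minimality is a purely categorical condition, phrased through composition and isomorphisms, so a right-minimal $\add E$-approximation becomes a projective cover in $\mathcal{P}(\Gamma)$ under the equivalence above.

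Next I would read off the rigidity. By naturality of the isomorphism in the variable $N=M$, the map $\Hom_{\Lambda}(f_1,M)\colon\Hom_{\Lambda}(E_0,M)\to\Hom_{\Lambda}(E_1,M)$ is identified with $\Hom_{\Gamma}(Ff_1,FM)$. Hence $M$ is $E$-rigid exactly when $\Hom_{\Gamma}(Ff_1,FM)$ is surjective; since $FE_1\xrightarrow{Ff_1}FE_0\to FM\to0$ is the minimal projective presentation of $FM$, Proposition~\ref{2.5} says this is equivalent to $FM$ being a $\tau$-rigid $\Gamma$-module. Applying \cite[Proposition 2.5]{AIR} to the $\tau$-rigid module $FM$, the projectives $FE_0$ and $FE_1$ have no nonzero common direct summand; transporting this back through the equivalence $F|_{\add E}$ shows $E_0$ and $E_1$ have no nonzero common direct summand, as desired.

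I expect the main obstacle to be the second step, namely verifying that applying the non-exact functor $F$ really returns an honest \emph{minimal} projective presentation; once exactness in the middle and the preservation of right-minimality are nailed down, the rest is formal transport along $F$. A self-contained alternative is to mimic the original argument directly: assuming a common indecomposable summand $R$, the relation $f_0f_1=0$ together with right-minimality of $f_0$ forces the diagonal component $R\to R$ of $f_1$ to be a non-isomorphism, after which the factorization coming from $E$-rigidity contradicts right-minimality. The ``radical and top'' bookkeeping that makes this clean genuinely lives in $\Gamma\text{-}\mod$, however, which is why the reduction above is the more transparent route.
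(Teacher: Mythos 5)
Your proof is correct, but it takes a genuinely different route from the paper's. The paper argues directly inside $\Lambda$-$\mod$, transcribing the original AIR argument with projectives replaced by $\add E$: for any $s\colon E_1\to E_0$, $E$-rigidity gives $t\colon E_0\to M$ with $tf_1=f_0s$, right-minimality of $f_0$ gives $u$ with $t=f_0u$, hence $f_0(s-uf_1)=0$, and the approximation property of $E_1\to\Im f_1$ gives $v$ with $s=uf_1+f_1v$; since $f_1$ is a radical morphism, every $s\colon E_1\to E_0$ is radical, so $E_0$ and $E_1$ share no nonzero direct summand. You instead transport the whole situation through $F=\Hom_{\Lambda}(E,-)$, invoke \cite[Proposition 2.5]{AIR} over $\Gamma$, and pull the conclusion back along the equivalence $\add E\simeq \mathcal{P}(\Gamma)$. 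This is legitimate and non-circular: the cited AIR result concerns $\tau$-rigid $\Gamma$-modules and is proved independently, and Proposition \ref{5.6} is not used in the paper's proof of Theorem \ref{5.8}. You also correctly isolate and settle the one delicate point, namely that the non-exact functor $F$ carries a minimal $\add E$-presentation to a minimal projective presentation (surjectivity and middle exactness from the two approximation properties, right-minimality transported along the Hom-isomorphisms); this is essentially the same transport the paper itself performs, with the details left implicit, in step (a) of the proof of Theorem \ref{5.8}. What your route buys is reuse of the classical result and of machinery the paper needs anyway; what the paper's route buys is a short, self-contained argument that never leaves $\Lambda$-$\mod$. One small correction: your closing remark that the direct argument's radical bookkeeping ``genuinely lives in $\Gamma$-$\mod$'' is not accurate --- the paper carries it out in $\Lambda$-$\mod$ itself via the radical of the category, the key fact being that right-minimality of $f_0$ forces $f_1$ to be a radical morphism.
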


\begin{proof} We show that any homomorphism $s: E_1\rightarrow E_0$ is in the radical. One gets a homomorphism $f_0s: E_1\rightarrow M$. Since $M$ is $E$-rigid, by Proposition \ref{2.5} there exists a homomorphism $t: E_0\rightarrow M$ such that $tf_1=f_0s$. Since $f_0$ is a minimal right $\add E$-approximation of $M$, then there is a homomorphism $u:E_0\rightarrow E_0$ such that $t=f_0u$. So one gets $f_0(s-uf_1)=0$ which implies ${\rm Im}(s-uf_1)\in {\rm Im}f_1$. Since $E_1\rightarrow {\rm Im}f_1$ is a minimal right $\add E$-approximation, then there exists a homomorphism $v:E_1\rightarrow E_1$ such that $s=uf_1+f_1v$. We have the following commutative diagram:
$$\xymatrix{\ &E_1\ar[ld]_{v}\ar[d]^{s}\ar[r]^{f_1}&E_0\ar[d]^{t}\ar[dl]^{u}\ar[r]^{f_0}&M\ar[r]&0\\
E_1\ar[r]^{f_1}&E_0\ar[r]^{f_0}&M\ar[r]&0& \ }.$$ The assertion follows from the fact $f_1$ is in the radical.
\end{proof}

\begin{definition}\label{5.7} Let $E$ be a generator in $\Lambda$-$\mod$. We call $M\in\Lambda$-$\mod$ {\it $E$-Gorenstein projective $E$-rigid} if it is both $E$-Gorenstein projective and $E$-rigid.
\end{definition}

Denote by $\mathcal {EG}(E)$ the set of isomorphism classes of indecomposable $E$-Gorenstein projecitve $E$-rigid modules in $\Lambda$-$\mod$ and by $\mathcal{TG}(\Gamma)$ the set of isomorphism classes of indecomposable Gorenstein projective $\tau$-rigid modules in $\Gamma$-$\mod$. Now we are in a position to state our main results in this section.

\begin{theorem}\label{5.8} Let $E$ be a generator in $\Lambda$-$\mod$ and $\Gamma=({\rm End}_{\Lambda}E)^{op}$. Then there is a bijection between $\mathcal {EG}(E)$ and $\mathcal{TG}(\Gamma)$.
\end{theorem}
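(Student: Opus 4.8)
The plan is to construct the bijection by transporting modules along the equivalence $F := \Hom_\Lambda(E,-)\colon \mathcal{G}(E)\to\mathcal{G}(\Gamma)$ of Proposition \ref{5.2}, and then to verify that this equivalence matches the $E$-rigid condition on the $\Lambda$-side with the $\tau$-rigid condition on the $\Gamma$-side. Since $F$ is already an equivalence between the $E$-Gorenstein projective modules and the genuine Gorenstein projective $\Gamma$-modules, it restricts to a bijection on isomorphism classes of indecomposables; the only thing left to check is that $M\in\mathcal{G}(E)$ is $E$-rigid \emph{if and only if} $FM\in\mathcal{G}(\Gamma)$ is $\tau$-rigid. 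Granting this, $F$ carries $\mathcal{EG}(E)$ onto $\mathcal{TG}(\Gamma)$ bijectively, which is exactly the claim.

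First I would record the behaviour of $F$ on $\add E$-presentations. By definition $F$ sends $E$ to the free module $\Gamma = \Hom_\Lambda(E,E)$, hence it sends $\add E$ to $\add\Gamma = \mathcal{P}(\Gamma)$, the projective $\Gamma$-modules. The key point is that $F$, being $\Hom_\Lambda(E,-)$ with $E$ a generator, converts a minimal right $\add E$-approximation into a projective cover over $\Gamma$, and more generally sends a minimal $\add E$-presentation $E_1\stackrel{f_1}{\to}E_0\stackrel{f_0}{\to}M\to 0$ to a minimal projective presentation $FE_1\stackrel{Ff_1}{\to}FE_0\to FM\to 0$ of $FM$. (Right-exactness of $\Hom_\Lambda(E,-)$ on the relevant sequences is available because the sequences defining $E$-Gorenstein projective modules stay exact under $\Hom_\Lambda(E,-)$, and because $f_0$ is a right $\add E$-approximation so every map out of $M$ lifts; minimality is preserved because $F$ is fully faithful on $\add E$ and therefore reflects and preserves radical maps, using Proposition \ref{5.6}.)

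Second, with presentations matched up, I would compare the two rigidity conditions directly. On the $\Lambda$-side, $E$-rigidity of $M$ says $\Hom_\Lambda(f_1,M)$ is surjective (Definition \ref{5.4}); on the $\Gamma$-side, $\tau$-rigidity of $FM$ says $\Hom_\Gamma(Ff_1,FM)$ is surjective (Proposition \ref{2.5}, applied to the minimal projective presentation $Ff_1$). Because $F$ is fully faithful, the adjunction/functoriality identity
\[
\Hom_\Gamma(Ff_1, FM)\;\cong\;\Hom_\Lambda(f_1, M)
\]
identifies these two maps, so one is surjective exactly when the other is. This yields the desired equivalence of conditions.

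The main obstacle I expect is the second sentence of the first bookkeeping step: showing cleanly that $F$ takes a \emph{minimal} $\add E$-presentation to a \emph{minimal} projective presentation, and in particular that $\Hom_\Lambda(E,-)$ is exact on these approximation sequences so that $Ff_1$ really is the map in a projective presentation of $FM$. Minimality must be argued via the fact that $F$ restricts to an equivalence $\add E\to\mathcal{P}(\Gamma)$ and hence sends radical maps to radical maps and approximations to covers; I would isolate this as a short lemma (essentially that $E$-approximations correspond to projective covers under $\Hom_\Lambda(E,-)$, a standard consequence of $E$ being a generator) so that the rigidity comparison in the final step becomes the formal identity displayed above.
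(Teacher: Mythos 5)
Your proposal is correct and takes essentially the same route as the paper: both use the Kong--Zhang equivalence $\Hom_{\Lambda}(E,-)$ of Proposition \ref{5.2} to send minimal $\add E$-presentations to minimal projective presentations and then invoke full faithfulness to identify $\Hom_{\Lambda}(f_1,M)$ with $\Hom_{\Gamma}(\Hom_{\Lambda}(E,f_1),\Hom_{\Lambda}(E,M))$, so that $E$-rigidity corresponds to $\tau$-rigidity via Proposition \ref{2.5}. The only differences are cosmetic: the paper splits your ``if and only if'' into separate well-definedness and surjectivity steps with explicit commutative diagrams, and your appeal to Proposition \ref{5.6} for preservation of minimality is unnecessary (it follows directly from the equivalence $\add E\simeq\mathcal{P}(\Gamma)$).
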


\begin{proof} We show that the functor ${\rm Hom}_{\Lambda}(E,-)$ is the desired map. We divide the proof into several steps.

\vspace{0.2cm}

(a) ${\rm Hom}_{\Lambda}(E,-)$ is a well-defined map from $\mathcal {EG}(E)$ to $\mathcal{TG}(\Gamma)$.

\vspace{0.2cm}

By Proposition \ref{5.2}, one gets that ${\rm Hom}_{\Lambda}(E,M)$ is indecomposable in $\mathcal{G}(\Gamma)$ for any indecomposable $M\in \mathcal{G}(E)$. For any $M\in\mathcal{EG}(E)$, we get a minimal $\add E$ presentation $$E_1\stackrel{f_1}{\rightarrow} E_0\stackrel{f_0}{\rightarrow} M\rightarrow 0.\ \ \ \ \ \ \ (1)$$

 Applying the functor ${\rm Hom}_{\Lambda}(E,-)$ to $(1)$, we get the following minimal projective presentation of ${\rm Hom}(E,M)$:
 $${\rm Hom}_{\Lambda}(E,E_1)\stackrel{{^*f_1}}{\rightarrow} {\rm Hom}_{\Lambda}(E,E_0)\stackrel{{^*f_0}}{\rightarrow} {\rm Hom}_{\Lambda}(E,M)\rightarrow0, \ \ \ \ \ \ \ \ (2)$$
 \noindent where $^*f_i={\rm Hom}_{\Lambda}(E,f_i)$ for $i=0,1$.

 Note that $M\in\mathcal{EG}(E)$, applying the functor ${\rm Hom}_{\Lambda}(-,M)$ to $(1)$, one gets the following exact sequence $${\rm Hom}_{\Lambda}(E_0, M)\stackrel{{f_1}'}{\rightarrow} {\rm Hom}_{\Lambda}(E_1, M)\rightarrow 0. $$ Applying the functor ${\rm Hom}_{\Gamma}(-,{\rm Hom}_{\Lambda}(E,M))$ to $^*f_1$, one gets the following complex $${\rm Hom}_{\Gamma}({\rm Hom}_{\Lambda}(E, E_0), {\rm Hom}_{\Lambda}(E,M))\stackrel{h}{\rightarrow} {\rm Hom}_{\Gamma}({\rm Hom}_{\Lambda}(E, E_1), {\rm Hom}_{\Lambda}(E,M)). $$
 By Proposition \ref{5.2}, one gets the following commutative diagram:
 $$\xymatrix{{\rm Hom}_{\Lambda}(E_0, M)\ar[r]^{{f_1}'}\ar[d]^{\cong}& {\rm Hom}_{\Lambda}(E_1, M)\ar[r]\ar[d]^{\cong}& 0\\
 {\rm Hom}_{\Gamma}({\rm Hom}_{\Lambda}(E, E_0), {\rm Hom}_{\Lambda}(E,M))\ar[r]^h&{\rm Hom}_{\Gamma}({\rm Hom}_{\Lambda}(E, E_1), {\rm Hom}_{\Lambda}(E,M))\ar[r]&0}$$
 which implies that $h$ is surjective. That is, ${\rm Hom}_{\Lambda}(E,M)$ is in $\mathcal {TG}(\Gamma)$ by Proposition \ref{2.5}.

 \vspace{0.2cm}

 (b) ${\rm Hom}_{\Lambda}(E,-)$ is injective.

 \vspace{0.2cm}

 Let $M_1, M_2$ be in $\mathcal{EG}(E)$ such that ${\rm Hom}_{\Lambda}(E,M_1)\simeq{\rm Hom}_{\Lambda}(E,M_2)$. By Proposition \ref{5.2}, one gets that $M_1\simeq M_2$.

 \vspace{0.2cm}

 (c) ${\rm Hom}_{\Lambda}(E,-)$ is surjective.

 \vspace{0.2cm}

 For any $N\in\mathcal{TG}(\Gamma)$, we get a minimal projective presentation of $N$: $$Q_1\stackrel{f_1}{\rightarrow} Q_0\stackrel{f_0}{\rightarrow} N\rightarrow 0.\ \ \ \ \ (3)$$
  By Proposition \ref{5.2}, we get the following minimal $\add E$-presentation of $M$:$$ E_1\stackrel{g_1}{\rightarrow} E_0\stackrel{g_0}\rightarrow M\rightarrow 0 \ \ \ \ \ \ (4)$$ such that $M\in\mathcal{G}(E)$, ${\rm Hom}_{\Lambda}(E,M)\simeq N$, ${\rm Hom}_{\Lambda}(E,E_i)\simeq Q_i$ and ${\rm Hom}_{\Lambda}(E,g_i)\simeq f_i$ for $i=0,1$.

  Note that $N\in\mathcal{TG}(\Gamma)$, applying the functor ${\rm Hom}_{\Lambda}(-,N)$ to $f_1$, one gets the following exact sequence $${\rm Hom}_{\Lambda}(Q_0, N)\stackrel{{f_1}^{'}}{\rightarrow} {\rm Hom}_{\Lambda}(Q_1, N)\rightarrow 0.$$  Similarly, applying the functor ${\rm Hom}_{\Lambda}(-,M)$ to $g_1$, one has the following complex $${\rm Hom}_{\Lambda}(E_0, M)\stackrel{{g_1}^{'}}{\rightarrow} {\rm Hom}_{\Lambda}(E_1, M).$$

  By using Proposition \ref{5.2} again, one gets the following commutative diagram:
  $$\xymatrix{{\rm Hom}_{\Gamma}(Q_0, N)\ar[r]^{{f_1}'}\ar[d]^{\cong}& {\rm Hom}_{\Gamma}(Q_1, N)\ar[r]\ar[d]^{\cong}& 0\\
  {\rm Hom}_{\Lambda}(E_0, M)\ar[r]^{{g_1}'}& {\rm Hom}_{\Lambda}(E_1, M)\ar[r]& 0}$$
 which implies that ${g_1}'$ is surjective, that is $M$ is $E$-rigid. And hence $M\in\mathcal{EG}(E)$.
 \end{proof}

 In the following we show the following application of Theorem \ref{5.8}.

\begin{definition}\label{5.9} Let $E$ be a generator in $\Lambda$-$\mod$.
\begin{enumerate}[\rm(1)]

\item We call $\Lambda$ {\it CM-E-free} if  every object in $\mathcal{EG}(E)$ is a direct summand of $E$.
Take $E=\Lambda$, the CM-$E$-free algebras are precisely CM-$\tau$-tilting free algebras in Section 3.

\item We call $\Lambda$ {\it CM-E-finite} if the number of objects in $\mathcal{EG}(E)$ is finite.
Take $E=\Lambda$, the CM-$E$-finite algebras are precisely CM-$\tau$-tilting finite algebras in Section 3.
\end{enumerate}
\end{definition}

As an application, we have the following corollary.

\begin{corollary}\label{5.10} Let $E$ be a generator in $\Lambda$-$\mod$. Then $\Lambda$ is CM-$E$-free if and only if $\Gamma=({\rm End}E)^{op}$ is CM-$\tau$-tilting free.
\end{corollary}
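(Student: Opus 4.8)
The plan is to run everything through the bijection $\Phi=\Hom_{\Lambda}(E,-)\colon\mathcal{EG}(E)\to\mathcal{TG}(\Gamma)$ supplied by Theorem~\ref{5.8}, and to check that under $\Phi$ the indecomposable direct summands of $E$ correspond exactly to the indecomposable projective $\Gamma$-modules. Once this matching is in place, both defining conditions become conditions on indecomposables: $\Lambda$ is CM-$E$-free precisely when every object of $\mathcal{EG}(E)$ is a summand of $E$, while $\Gamma$ is CM-$\tau$-tilting free precisely when every object of $\mathcal{TG}(\Gamma)$ is an indecomposable projective. So I would first record that both properties are summand-closed, then match the two distinguished subsets under $\Phi$, and finally chain the equivalences.

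The key step is the identification of summands of $E$ with projectives. First note that $E\in\add E\subseteq\mathcal{G}(E)$, since each $X\in\add E$ admits a totally split $\add E$-resolution witnessing $E$-Gorenstein projectivity, so $\Phi$ is defined on all summands of $E$. Next, $\Hom_{\Lambda}(E,E)=\End_{\Lambda}E\cong{}_{\Gamma}\Gamma$ as a left $\Gamma$-module, whence $\Phi(E)\cong\Gamma$. Since $\Phi$ is additive and is the restriction of the equivalence $\mathcal{G}(E)\to\mathcal{G}(\Gamma)$ of Proposition~\ref{5.2}, it sends $\add E$ onto $\add{}_{\Gamma}\Gamma=\mathcal{P}(\Gamma)$, and as an equivalence it preserves and reflects both indecomposability and membership in these full subcategories. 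Hence for an indecomposable $M\in\mathcal{EG}(E)$ one has $M\in\add E$ if and only if $\Phi(M)=\Hom_{\Lambda}(E,M)$ is an indecomposable projective $\Gamma$-module: the forward implication is immediate, and the converse follows by applying the quasi-inverse of $\Phi$ to $\Phi(M)\in\mathcal{P}(\Gamma)=\Phi(\add E)$.

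With this in hand I would conclude as follows. A direct sum is projective if and only if each of its indecomposable summands is, and the class of Gorenstein projective $\tau$-rigid modules is closed under direct summands (Gorenstein projectivity is summand-closed, and $\tau$-rigidity is summand-closed by \cite{AIR}); hence $\Gamma$ is CM-$\tau$-tilting free if and only if every indecomposable Gorenstein projective $\tau$-rigid $\Gamma$-module, that is, every object of $\mathcal{TG}(\Gamma)$, is projective. Combining, $\Lambda$ is CM-$E$-free if and only if every indecomposable $M\in\mathcal{EG}(E)$ lies in $\add E$, if and only if every $\Phi(M)\in\mathcal{TG}(\Gamma)$ is an indecomposable projective, if and only if $\Gamma$ is CM-$\tau$-tilting free; here the middle step is the bijection of Theorem~\ref{5.8} together with the matching of summands of $E$ with projectives established above.

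The main obstacle is the second paragraph: one must be certain that the abstract equivalence of Proposition~\ref{5.2} genuinely carries the relative projectives $\add E$ onto the honest projectives $\mathcal{P}(\Gamma)$, which rests on the bimodule identification $\Hom_{\Lambda}(E,E)\cong{}_{\Gamma}\Gamma$ and on the fact that an equivalence reflects projectivity. Everything else is a formal manipulation of the bijection, with the only minor point to verify being the containment $\add E\subseteq\mathcal{G}(E)$ so that $\Phi$ is defined on all summands of $E$.
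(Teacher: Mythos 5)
Your proof is correct and takes essentially the same route as the paper: the paper's own proof is just the one-line observation that the corollary follows directly from the bijection of Theorem~\ref{5.8}. Your argument supplies the details that the paper leaves implicit, namely that $\Hom_{\Lambda}(E,-)$ matches $\add E$ with $\mathcal{P}(\Gamma)$ (projectivization) and that both defining properties reduce, via summand-closure, to conditions on the indecomposable objects of $\mathcal{EG}(E)$ and $\mathcal{TG}(\Gamma)$.
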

\begin{proof} It is a straight result of Theorem \ref{5.9}.
\end{proof}

We end this section with the following question(compare \cite[Theorem 4.5]{KZ}).

\begin{question}\label{5.11}  Let $E$ be a generator in $\Lambda$-$\mod$. Let $\Lambda$ be CM-E-finite with $F$ the additive generator of $E$-Gorenstein projective $E$-rigid modules. Is $\Gamma'=({\rm End}(F))^{op}$ a CM-$\tau$-tilting free algebra?
\end{question}

\section{Tachikawa's Conjecture}

In this section, we try to characterize self-injective algebras in terms of Gorenstein projective $\tau$-rigid modules. As a result, we can give a partial answer to Tachikawa's first conjecture.

Recall from \cite{RZ1} that a module $M\in \Lambda$-$\mod$ is called {\it semi-Gorenstein projective} if ${\rm Ext}_{\Lambda}^{i}(M,\Lambda)=0$ for any $i\geq1$. Dually,  we call a module $N\in$$\Lambda$-$\mod$ {\it semi-Gorenstein injective} if ${\rm Ext}_{\Lambda}^{i}(\mathbb{D}\Lambda, N)=0$ for $i\geq1$. The Tachikawa's first conjecture is equivalent to that for an algebra $\Lambda$, if $\mathbb{D}\Lambda$ is semi-Gorenstein projective, then $\Lambda$ is self-injective.

To show the main result in this section, we need the following result in \cite{XZZ}.

\begin{proposition}\label{3.1} Let $\Lambda$ be an algebra.  The following are equivalent.
\begin{enumerate}[\rm(1)]
\item $\Lambda$ is a $1$-Gorenstein algebra.
\item $\mathbb{D}\Lambda$ is $\tau$-rigid
\item $\Lambda$ is $\tau^{-1}$-rigid.
\end{enumerate}
\end{proposition}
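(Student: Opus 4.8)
The plan is to convert each $\tau$-rigidity condition into a one-sided bound on the projective dimension of $\mathbb{D}\Lambda$, and then to recognize those bounds as the standard homological descriptions of a $1$-Gorenstein algebra. The engine is Proposition \ref{2.5}: for a minimal projective presentation $P_1\xrightarrow{f}P_0\to M\to 0$, the module $M$ is $\tau$-rigid precisely when $\Hom_\Lambda(f,M)$ is epic. The decisive simplification is that $\mathbb{D}\Lambda$ is injective, so $\Hom_\Lambda(f,\mathbb{D}\Lambda)$ can be computed by hand through the natural isomorphism $\Hom_\Lambda(X,\mathbb{D}\Lambda)\cong\mathbb{D}X$ (tensor--hom adjunction for $\mathbb{D}\Lambda=\Hom_K(\Lambda,K)$).

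First I would settle $(1)\Leftrightarrow(2)$. Fix a minimal projective presentation $P_1\xrightarrow{f}P_0\to\mathbb{D}\Lambda\to 0$. Under the natural isomorphism above, $\Hom_\Lambda(f,\mathbb{D}\Lambda)$ is identified with $\mathbb{D}(f)\colon\mathbb{D}P_0\to\mathbb{D}P_1$, and since $\mathbb{D}$ is an exact duality, $\mathbb{D}(f)$ is epic if and only if $f$ is monic. By Proposition \ref{2.5}, $\mathbb{D}\Lambda$ is $\tau$-rigid iff $f$ is monic, i.e. iff $0\to P_1\to P_0\to\mathbb{D}\Lambda\to 0$ is exact, i.e. iff $\pd_\Lambda\mathbb{D}\Lambda\le 1$. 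Dualizing a minimal injective resolution of the right module $\Lambda_\Lambda$ gives $\pd_\Lambda\mathbb{D}\Lambda=\id\Lambda_\Lambda$, so $(2)$ is equivalent to $\id\Lambda_\Lambda\le 1$.

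For $(1)\Leftrightarrow(3)$ I would run the dual argument by passing to $\Lambda^{\mathrm{op}}$. The $\mathbb{D}$-duality interchanges $\tau^{-1}$ over $\Lambda$ with $\tau$ over $\Lambda^{\mathrm{op}}$ and sends the left regular module ${}_\Lambda\Lambda$ to the injective cogenerator of $\Lambda^{\mathrm{op}}$-$\mod$; concretely $\Hom_\Lambda(\tau^{-1}\Lambda,\Lambda)\cong\Hom_{\Lambda^{\mathrm{op}}}(\mathbb{D}\Lambda,\tau(\mathbb{D}\Lambda))$, so $(3)$ says exactly that $\mathbb{D}\Lambda$ is $\tau$-rigid over $\Lambda^{\mathrm{op}}$. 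Applying the equivalence just proved to $\Lambda^{\mathrm{op}}$ yields $(3)\Leftrightarrow\pd_{\Lambda^{\mathrm{op}}}\mathbb{D}\Lambda\le 1\Leftrightarrow\id{}_\Lambda\Lambda\le 1$, which is the defining inequality of a $1$-Gorenstein algebra.

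It remains to assemble: $(2)\Leftrightarrow\id\Lambda_\Lambda\le 1$ and $(3)\Leftrightarrow\id{}_\Lambda\Lambda\le 1$, while $(1)$ is $\id{}_\Lambda\Lambda\le 1$ together with $\id\Lambda_\Lambda<\infty$. The main obstacle — the only genuinely non-formal ingredient — is the left--right symmetry of finite self-injective dimension: that $\id{}_\Lambda\Lambda\le 1$, $\id\Lambda_\Lambda\le 1$ and $1$-Gorensteinness all coincide. I would discharge this by invoking the classical theorem that, for an Artin algebra, finiteness of the injective dimension of the regular module on one side forces it on the other with the same value; hence a one-sided bound of $1$ already upgrades to $1$-Gorensteinness. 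This identifies conditions $(2)$ and $(3)$ with $(1)$ and closes the circle $(1)\Leftrightarrow(2)\Leftrightarrow(3)$.
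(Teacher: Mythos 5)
First, a point of comparison: the paper contains no proof of Proposition \ref{3.1} at all --- it is imported verbatim from \cite{XZZ} --- so your attempt can only be measured against the standard argument, which your first two steps do reproduce correctly. The reduction of (2) is sound: Proposition \ref{2.5} together with the natural isomorphism $\Hom_\Lambda(-,\mathbb{D}\Lambda)\cong\mathbb{D}(-)$ identifies $\tau$-rigidity of $\mathbb{D}\Lambda$ with injectivity of $f$ in a minimal projective presentation, hence (using minimality for the converse direction) with $\pd{}_\Lambda\mathbb{D}\Lambda\le 1$, which equals $\id\Lambda_\Lambda\le 1$; and your passage to $\Lambda^{\mathrm{op}}$ via $\mathbb{D}\tau^{-1}\cong\tau\mathbb{D}$ correctly converts (3) into $\id{}_\Lambda\Lambda\le 1$. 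Since your reductions make (2) a statement about the right side and (3) a statement about the left side, any proof along these lines must contain a left--right symmetry ingredient, and you correctly isolate this as the one non-formal step.

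The gap is in how you discharge that step. The ``classical theorem'' you invoke --- that for an Artin algebra, finiteness of the injective dimension of the regular module on one side forces finiteness on the other side with the same value --- is not a theorem: it is the Gorenstein symmetry conjecture, which is open. The genuine classical result (Zaks) says only that if \emph{both} $\id{}_\Lambda\Lambda$ and $\id\Lambda_\Lambda$ are finite, then they are equal; this yields $(1)\Rightarrow(2)$ and $(1)\Rightarrow(3)$, but it cannot give the converses, which need the one-sided implications $\id\Lambda_\Lambda\le 1\Rightarrow\id{}_\Lambda\Lambda\le 1$ and its mirror. Fortunately, the instance you actually need --- symmetry of the bound $1$ --- is a true classical fact, but it requires its own argument rather than the miscited general statement: if $\id\Lambda_\Lambda\le 1$, then ${}_\Lambda\mathbb{D}\Lambda$ has projective dimension at most $1$, is $\Ext^1$-self-orthogonal because it is injective, and satisfies $|\mathbb{D}\Lambda|=|\Lambda|$, hence is a classical tilting module whose endomorphism algebra is $\Lambda^{\mathrm{op}}$; the tilting theorem then transfers the projective-dimension bound across the endomorphism algebra and yields $\id{}_\Lambda\Lambda\le 1$. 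With that substitution (or with a citation to this known symmetry of the $1$-Gorenstein condition, which is in effect what \cite{XZZ} supplies), your proof closes; as written, its keystone rests on a statement that is not known to be true.
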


Note that a self-injective algebra should be a $1$-Gorenstein algebra, by Proposition \ref{3.1} it is natural to study Tachikawa's first conjecture in terms of $\tau$-rigid modules.
In the following we give our main result in this section which is a partial answer to Tachikawa's first conjecture.

\begin{theorem}\label{3.2} Let $\Lambda$ be an algebra. Then the following are equivalent.
\begin{enumerate}[\rm(1)]
\item $\Lambda$ is self-injective.
\item $\mathbb{D}\Lambda$ is semi-Gorenstein projective $\tau$-rigid.
\item Every cotilting module is semi-Gorenstein projective $\tau$-rigid.
\item $\mathbb{D}\Lambda$ is Gorenstein projective.
\item Every coltilting module is Gorenstein projective.
\item $\Lambda$ is semi-Gorenstein-injective $\tau^{-1}$-rigid.
\item Every tilting module is semi-Gorenstein injective $\tau^{-1}$-rigid.
\item $\Lambda$ is Gorenstein-injective.
\item  Every tilting module is Gorenstein-injective.

\end{enumerate}
\end{theorem}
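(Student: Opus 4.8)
The plan is to organize the nine statements around the hub $(1)$: first I would establish the equivalence of the five ``projective-side'' conditions $(1)$--$(5)$, and then obtain the four ``injective-side'' conditions $(6)$--$(9)$ by applying that equivalence to the opposite algebra $\Lambda^{\op}$ and translating everything through the duality $\mathbb{D}$. The self-dual nature of self-injectivity ($\Lambda$ is self-injective iff $\Lambda^{\op}$ is) will make the second half essentially formal once the first is in place.

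For the projective side I would run the cycle $(1)\Rightarrow(3)\Rightarrow(2)\Rightarrow(4)\Rightarrow(1)$ together with $(1)\Rightarrow(5)\Rightarrow(4)$. The implications out of the ``every cotilting module'' statements are immediate after recording the two structural facts that $\mathbb{D}\Lambda$ is itself a cotilting module (the injective cogenerator has injective dimension $0$) and that $\Lambda$ is a tilting module; thus $(3)\Rightarrow(2)$ and $(5)\Rightarrow(4)$ are just the specialization $T=\mathbb{D}\Lambda$. For $(1)\Rightarrow(3)$ and $(1)\Rightarrow(5)$ I would use that over a self-injective (hence $0$-Gorenstein) algebra every module of finite injective dimension is injective, so every cotilting module is injective $=$ projective; a projective module $P$ is $\tau$-rigid because $\tau P=0$, is semi-Gorenstein projective because $\Ext^i_\Lambda(P,\Lambda)=0$, and is of course Gorenstein projective. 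For $(2)\Rightarrow(4)$ I would invoke Proposition~\ref{3.1}: the hypothesis that $\mathbb{D}\Lambda$ is $\tau$-rigid forces $\Lambda$ to be $1$-Gorenstein, and over a Gorenstein algebra a semi-Gorenstein projective module is Gorenstein projective (a standard fact for Iwanaga--Gorenstein algebras), so $\mathbb{D}\Lambda$ is Gorenstein projective.

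The crux, and the step I expect to be the main obstacle, is $(4)\Rightarrow(1)$. Here I would argue that if $\mathbb{D}\Lambda$ is Gorenstein projective, then Definition~\ref{2.1} furnishes a complete projective resolution $\cdots\to P_{-1}\to P_0\to P_1\to\cdots$ with $\mathbb{D}\Lambda\cong\Im(P_{-1}\to P_0)$; in particular the resulting monomorphism $\mathbb{D}\Lambda\hookrightarrow P_0$ realizes the injective cogenerator as a submodule of a projective module. Since $\mathbb{D}\Lambda$ is injective this monomorphism splits, so $\mathbb{D}\Lambda$ is a direct summand of a projective and hence is itself projective. Every indecomposable injective is a summand of $\mathbb{D}\Lambda$, so all injectives are now projective; as $\Lambda$ has equally many indecomposable injectives and indecomposable projectives, a counting argument upgrades the inclusion $\mathcal{I}(\Lambda)\subseteq\mathcal{P}(\Lambda)$ to equality, which is exactly self-injectivity.

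Finally, for the injective side I would apply the equivalence $(1)\Leftrightarrow(2)\Leftrightarrow(3)\Leftrightarrow(4)\Leftrightarrow(5)$, now proved for every algebra, to $\Lambda^{\op}$. The duality $\mathbb{D}\colon\Lambda^{\op}\text{-}\mod\to\Lambda\text{-}\mod$ sends projectives to injectives and a complete projective resolution to a complete injective resolution, hence interchanges ``Gorenstein projective over $\Lambda^{\op}$'' with ``Gorenstein injective over $\Lambda$''; it likewise turns semi-Gorenstein projectivity into semi-Gorenstein injectivity, $\tau$-rigidity into $\tau^{-1}$-rigidity, cotilting modules into tilting modules, and the injective cogenerator of $\Lambda^{\op}\text{-}\mod$ into $\Lambda$. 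Under this dictionary statements $(2)$--$(5)$ for $\Lambda^{\op}$ read off precisely as $(6)$--$(9)$ for $\Lambda$, while $(1)$ for $\Lambda^{\op}$ is $(1)$ for $\Lambda$, completing all nine equivalences. As a sanity shortcut one notes that $(2)$ and $(6)$ already share the single Ext-condition $\Ext^i_\Lambda(\mathbb{D}\Lambda,\Lambda)=0$, and that ``$\mathbb{D}\Lambda$ is $\tau$-rigid'' and ``$\Lambda$ is $\tau^{-1}$-rigid'' are both equivalent to $1$-Gorensteinness by Proposition~\ref{3.1}, so $(2)\Leftrightarrow(6)$ needs no duality at all.
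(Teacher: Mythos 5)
Your proof is correct, but it takes a genuinely different route from the paper's, and the differences are worth recording. The paper argues both sides directly and in parallel: it takes $(1)\Leftrightarrow(4)$ and $(1)\Leftrightarrow(8)$ to be immediate from the definitions, proves $(2)\Rightarrow(1)$ by noting that $\mathbb{D}\Lambda$ is a faithful $\tau$-tilting module and hence a tilting module by \cite{AIR}, so that $\pd\mathbb{D}\Lambda\leq 1$ and semi-Gorenstein projectivity force $\mathbb{D}\Lambda$ to be projective, and proves $(6)\Rightarrow(1)$ by essentially the argument you sketch as a ``sanity shortcut'' (Proposition~\ref{3.1} plus the shared Ext-condition $\Ext_{\Lambda}^{i}(\mathbb{D}\Lambda,\Lambda)=0$). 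You differ in three places. First, your crux implication on the projective side is $(2)\Rightarrow(4)$, obtained from Proposition~\ref{3.1} together with the standard fact that over an Iwanaga--Gorenstein algebra the semi-Gorenstein projective modules are exactly the Gorenstein projective ones (this is in \cite{AuR} and \cite{EJ2} but is nowhere stated in the paper, so you import an external result); the paper instead goes $(2)\Rightarrow(1)$ via the tilting-theoretic fact that faithful $\tau$-tilting modules are tilting, which you avoid. Second, you supply an actual proof of $(4)\Rightarrow(1)$ --- the complete resolution embeds $\mathbb{D}\Lambda$ into a projective module, injectivity splits the embedding, and a Krull--Schmidt count upgrades $\mathcal{I}(\Lambda)\subseteq\mathcal{P}(\Lambda)$ to equality --- whereas the paper dismisses $(1)\Leftrightarrow(4)$ as following ``from the definitions''; your paragraph is precisely the detail that assertion suppresses. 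Third, you obtain the entire injective side $(6)$--$(9)$ formally by applying the projective-side equivalence to $\Lambda^{\op}$ and transporting through $\mathbb{D}$ (checking that $\mathbb{D}$ exchanges Gorenstein projective with Gorenstein injective, semi-Gorenstein projective with semi-Gorenstein injective, $\tau$-rigid with $\tau^{-1}$-rigid, and cotilting with tilting), whereas the paper re-proves $(1)\Leftrightarrow(6)$, $(6)\Leftrightarrow(7)$ and $(8)\Leftrightarrow(9)$ by hand. Your reduction halves the work and makes the left--right symmetry structural, at the price of verifying the duality dictionary; the paper's parallel treatment avoids that bookkeeping but repeats arguments and leaves its two ``clear'' equivalences unproved. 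Both proofs are complete.
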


\begin{proof} (a) Since $(1)\Leftrightarrow(4)$ is from the definitions of Gorenstein projective modules and self-injective algebras, we only show $(1)\Leftrightarrow (2)$,$(2)\Leftrightarrow(3)$, and $(4)\Leftrightarrow(5)$.

$(1)\Rightarrow (2)$ Since $\Lambda$ is self-injective, then $\mathbb{D}\Lambda$ is projective and hence semi-Gorenstein projective and $\tau$-rigid.

$(2)\Rightarrow(1)$  Since $\mathbb{D}\Lambda$ is $\tau$-rigid and $|\mathbb{D}\Lambda|=|\Lambda|$, one gets that $\mathbb{D}\Lambda$ is a $\tau$-tilting module. Note that $\mathbb{D}\Lambda$ is faithful, then $\mathbb{D}\Lambda$ is a tilting module. Since $\mathbb{D}\Lambda$ is semi-Gorenstein projective, then
 the fact $\rm{pd}\mathbb{D}\Lambda\leq 1$ implies that $\mathbb{D}\Lambda$ is projective. The assertion holds.

$(3)\Rightarrow (2)$  This is obvious.

$(2)\Rightarrow (3)$  By $(2)\Leftrightarrow (1)$, $\Lambda$ is self-injective, and hence $\Lambda$ is the unique cotilting module in $\mod\Lambda$. The assertion holds.

$(5)\Rightarrow(4)$ This is obvious.

$(4)\Rightarrow(5)$  By (4) one gets that $\Lambda$ is self-injective, and hence $\Lambda$ is the unique cotilting module in $\mod\Lambda$.

(b) Since $(1)\Leftrightarrow (8)$ is clear, we only need to show $(1)\Leftrightarrow(6)$, $(6)\Leftrightarrow(7)$ and $(8)\Leftrightarrow(9)$.

$(1)\Rightarrow(6)$ Since $\Lambda$ is self-injective, then $\Lambda\simeq \mathbb{D}\Lambda$ is injective, and hence semi-Gorenstein injective and $\tau^{-1}$-rigid.

$(6)\Rightarrow (1)$ Since $\Lambda$ is $\tau^{-1}$-rigid, by Proposition \ref{3.1}, one gets that $\Lambda$ is $1$-Gorenstein. Note that $\Lambda$ is semi-Gorenstein injective, one gets ${\rm Ext}_{\Lambda}^{i}(\mathbb{D}\Lambda,\Lambda)=0$ for $i\geq1$, which implies that $\Lambda$ is self-injective.

$(7)\Rightarrow (6)$ This is obvious.

$(6)\Rightarrow (7)$ By $(6)\Leftrightarrow(1)$, $\Lambda$ is self-injective. Then the unique tilting module in $\Lambda$-$\mod$ is $\mathbb{D}\Lambda$, the assertion holds.

$(9)\Rightarrow(8)$ This is obvious.

$(8)\Rightarrow(9)$  By $(8)\Leftrightarrow(1)$, $\Lambda$ is self-injective. Then the unique tilting module in $\Lambda$-$\mod$ is $\mathbb{D}\Lambda$, the assertion holds.
\end{proof}

\begin{remark}\label{3.3}  By Theorem \ref{3.2}, one gets that the fact $\mathbb{D}\Lambda$ is semi-Gorenstein projective $\tau$-rigid is equivalent to the fact $\mathbb{D}\Lambda$ is Gorenstein projective. To prove Tachikawa's Conjecture, it is meaningful to show that the fact $\mathbb{D}\Lambda$ is semi-Gorenstein projective implies the fact $\mathbb{D}\Lambda$ is $\tau$-rigid.
\end{remark}

{\bf Acknowledgement} The authors want to thank Prof. Zhaoyong Huang and Prof. Zhi-Wei Li for useful discussions and suggestions. The authors are supported by the NSFC(Nos. 12171207, 12201211, 12371038).


\begin{thebibliography}{101}
 \bibitem[AIR]{AIR} T. Adachi,  O. Iyama and I. Reiten, $\tau$-tilting theory, Compos. Math, 150(3)(2014), 415-452.
\bibitem [AH]{AH} T. Aihara and T. Honma, $\tau$-tilting finite triangular matrix algebras, J. Pure Appl. Algebra, 225(2)(2021), 106785.
\bibitem[AI]{AI} T. Aihara and O. Iyama, Silting mutation in triangulated categories, J. Lond. Math. Soc, 85(3)(2012), 633-668.
\bibitem[AuB]{AuB} M. Auslander and M. Bridger, Stable module theory, Memoirs of the American Mathematical Society, 1969.
\bibitem[AuR]{AuR} M. Auslander and I. Reiten, Cohen-Macaulay and Gorenstein Artin algebras, Representation Theory of Finite Groups and Finite-Dimensional Algebras, Birkh\"{a}user Basel, 95(1991), 221-245.
\bibitem[AuRS]{AuRS} M. Auslander, I. Reiten and S. O. Smal{\o}, Representation Theory of Artin Algebras, Cambridge Stud. Adv. Math, vol. 36, Cambridge Univ. Press, 1995.
 \bibitem[B]{B} A. Beligiannis, On algebras of finite Cohen-Macaulay type, Adv. Math, 226(2)(2011), 1973-2019.

\bibitem[C]{C} X. W. Chen, An Auslander-type result for Gorenstein-projective modules, Adv. Math, 218 (2008), 2043-2050.
\bibitem[CE]{CE} H. Cartan  and S. Eilenberg, Homological Algebra, Princeton University Press, Princeton, 1956.

 \bibitem[CSZ]{CSZ} X. W. Chen, D. W. Shen and G. D. Zhou, The Gorenstein-projective modules over a monomial algebra. Proc. Roy. Soc. Edinburgh Sect. A, 148(6)(2018), 1115-1134.
 \bibitem[DIJ]{DIJ} L. Demonet, O. Iyama and G. Jasso, $\tau$-tilting finite algebras, bricks and $g$-vectors, Int. Math. Res. Not, 3(2019), 852-892.
\bibitem[EJ1]{EJ1}  E. E. Enochs and O. M. G. Jenda, Gorenstein injective and projective modules, Math. Z, 220(1995), 611-633.
\bibitem[EJ2]{EJ2} E. E. Enochs and O. M. G. Jenda, Relative Homological Algebra. de Gruyter Exp.Math, vol. 30, Walter de Gruyter Co., 2000.
 \bibitem[FGLZ]{FGLZ} C. J.  Fu, S. F. Geng, P. Liu and Y. Zhou, On support $\tau$-tilting graphs of gentle algebras, J. Algebra, 628(2023), 189-211.
 \bibitem[GMZ]{GMZ} N. Gao, J. Ma and C. H. Zhang, Gorenstein silting modules and Gorenstein projective modules, arXiv:2209.00520.
 \bibitem[HLXZ]{HLXZ} W. Hu, X. H. Luo, B. L. Xiong and G. D. Zhou, Gorenstein projective bimodules via monomorphism categories and filtration categories, J. Pure Appl. Algebra, 223(3)(2019),1014-1039.

 \bibitem[IY]{IY} O. Iyama and Y. Yoshino, Mutation in triangulated categories and rigid Cohen-Macaulay modules, Invent. Math, 172(2008), 117-168.
\bibitem[IZ1]{IZ1} O. Iyama and X. J. Zhang, Tilting modules over Auslander-Gorenstein algebras, Pacific J. Math, 298(2)(2019), 399-416.
 \bibitem[IZ2]{IZ2} O. Iyama and X. J. Zhang, Classifying $\tau$-tilting modules over the Auslander algebra of of $K[x]/(x^n)$, J. Math. Soc. Japan, 72(3)(2020), 731-764.
\bibitem[K]{K}  S. Kvamme, Gorenstein projective objects in functor categories, Nagoya Math. J, 240(2020), 1-41.
\bibitem[KK]{KK} R. Koshio, and Y. Kozakai, On support $\tau$-tilting modules over blocks covering cyclic blocks, J. Algebra, 580(2021), 84-103.
\bibitem[KZ]{KZ} F. Kong and P. Zhang, From CM-finite to CM-free, J. Pure Appl. Algebra, 220(2016), 782-801.
\bibitem[LZ1]{LZ1} Z. W. Li and P. Zhang, Gorenstein algebras of finite Cohen-Macaulay type, Adv. Math, 218(2010), 728-734.
 \bibitem[LZ2]{LZ2} Z. W. Li and P. Zhang, A construction of Gorenstein-projective modules, J. Algebra, 323(2010), 1802-1812.
 \bibitem[LZh]{LZh} Z. W. Li and X. J. Zhang, $\tau$-tilting modules over trivial extensions, Internat J. Algebra Comput, 32(3)(2022), 617-628.
  \bibitem[LZh2]{LZh2} Z. W. Li and X. J. Zhang, A construction of Gorenstein projective $\tau$-tilting modules, Colloq Math, 171(1)(2023), 103-112.
\bibitem[PMH]{PMH} Y. Y. Peng, X. Ma and Z. Y. Huang, $\tau$-tilting modules over triangular matrix artin algebras, Internat J. Algebra Comput, 31(4)(2021), 639-661.
\bibitem[RZ1]{RZ1} C. M. Ringel and P. Zhang, Gorenstein projective modules and Semi-Gorenstein projective modules, Algebra Number Theory, 14(2020), 1-36.
 \bibitem[RZ2]{RZ2} C. M. Ringel and P. Zhang, Gorenstein projective modules and Semi-Gorenstein projective modules II, J Pure Appl. Algebra, 224(2020), 106248.
 \bibitem[RZ3]{RZ3} C. M. Ringel and P. Zhang, On modules $M$ such that both $M$ and $M^*$ are semi-Gorenstein-projective, Algebr. Represent. Theory, 24(4)(2021), 1125-1140.
 \bibitem[RZ4]{RZ4} C. M. Ringel and P. Zhang, Gorenstein-projective modules over short local algebras, J. Lond. Math. Soc, 106(2)(2022), 528-589.
\bibitem[S]{S} S. O. Smal{\o}, Torsion theory and tilting modules, Bull. Lond. Math. Soc, 16(1984), 518-522.
\bibitem[T]{T} H. Tachikawa, Quasi-Frobenius Rings and Generalizations, LNM, vol.351, Springer, Berlin-Heidelberg-New York, 1973.
\bibitem[W]{W} Q. Wang, On $\tau$-tilting finiteness of the Schur algebra, J. Pure Appl. Algebra, 226(1) (2022), 106818.
\bibitem[WZ]{WZ} Q. Wang and Y. Y. Zhang, On $\tau$-tilting modules over trivial extensions of gentle tree algebras, arXiv: 2204.06418.
\bibitem[XZZ]{XZZ} Z. Z. Xie, L. B. Zan and X. J. Zhang, Three results for $\tau$-rigid modules, Rocky Mountain J. Math, 49(8)(2019), 2791-2808.
\bibitem[XZ]{XZ} Z. Z. Xie and X. J. Zhang, A bijection theorem for Gorenstein projective $\tau$-tilting modules, to appear in Journal of Algebra and its Applications, arXiv: 2109. 01248.
\bibitem[Z]{Z}  X. J. Zhang, Self-orthogonal $\tau$-tilting modules and tilting modules, J Pure Appl. Algebra, 226(3)(2022), 10860.
\bibitem[Zh]{Zh}
Y. Y. Zhang, Gluing support $\tau$-tilting modules via symmetric ladders of height 2, to appear in Sci. China Math, arXiv: 2105.09636.
\bibitem[Zi]{Zi} S. Zito, $\tau$-tilting finite cluster-tilted algebras, Proc. Edinb. Math. Soc, 63(4)(2020), 950-955.


\end{thebibliography}
\end{document}